\newtheorem{theorem}{Theorem}[section]
\newtheorem{lemma}[theorem]{Lemma}
\newtheorem{example}[theorem]{Example}
\theoremstyle{definition}
\theoremstyle{remark}
\newtheorem{remark}[theorem]{Remark}
\numberwithin{equation}{section}
\begin{document}
\title[A SDFEM for system of two singularly...]{A SDFEM for system of two
singularly perturbed problems of convection-diffusion type with discontinuous source term.}
\author[]{\textbf{ A. ~Ramesh Babu }\quad  \\  Department of Mathematics \\  School of Engineering \\
AMRITA Vishwa Vidyapeetham \\  Coimbatore - 641 112 \\ Tamilnadu,  India.}
\thanks{The author wishes to
acknowledge a support rendered by Bharathidasan University,  Tamilnadu,  India through collabration with Prof. N. Ramanujam.
\\ E-mail: a\_rameshbabu@cb.amrita.edu,  matramesh2k5@yahoo.co.in(A. ~Ramesh Babu)}
\begin{abstract}
We consider a system of two singularly perturbed Boundary Value Problems (BVPs) of convection-diffusion type with discontinuous source terms and a small positive parameter multiplying the highest derivatives.  Then their solutions exhibit boundary layers as well as weak interior layers.  A numerical method based on finite element method (Shishkin and Bakhvalov-Shishkin meshes) is presented.  We derive an error estimate of order $O(N^{-1}\ln^{3/2}{N})$ in the energy norm with respect to the perturbation parameter.  Numerical experiments are also presented to support our theoritical results.
\vskip 0.5 true cm
\noindent {AMS Mathematics Subject Classification:}{ 65L10, CR G1.7}

\noindent {\it Key words:Singularly perturbed problem, Discontinuous
source term, Weakly coupled system,  Finite
element method, Energy norm,
Convection-diffusion,  Boundary value problem.}
\end{abstract}
\maketitle
\section{Introduction}
      Singularly Perturbed Differential Equations(SPDEs) appear in several
branches of applied mathematics.
 Analytical and numerical treatment
of these equations have drawn much attention of many researchers
\cite{mil2000, dool, roos96, nay, Omalley90}.  In general, classical
numerical methods fail to produce good approximations for these
equations. Hence one has to look for non-classical methods.  A good
number of articles have been appearing in the past three decades on
non-classical methods which cover mostly second order equations.
But only a few authors have developed numerical methods
for singularly perturbed system of ordinary
differential equations.\cite{linssmadden04,maddenstyn03,linsmad03,cen05,bellew04,tamram07a}.  \\
\quad Systems of this kind have applications in electro analytic chemistry when investigating diffusion processes complicated by chemical reactions.  The parameters multiplying the highest derivatives characterize the diffusion coefficient of the substances.  Other applications include equations of predator-prey population dynamics.  As was mentioned above,  classical numerical methods fails to produce good approximations for  singularly perturbed system of equations also.  Hence various methods are proposed in the literature in order to obtain numerical solution to singularly perturbed system of second order differential equations subject to Dirichlet type boundary conditions when the source terms are smooth on $(0,1)$ \cite{maddenstyn03,cen05,bellew04}.  Motivated by the works of T. Lin$\ss{}$ and N. Madden \cite{linssmadden04}, in the present paper we suggest a numerical method for singularly perturbed weakly coupled system of two ordinary differential equations of convection-diffusion type with discontinuous source terms.  Basically the method is based on Streamline Diffusion Finite Element Method (SDFEM) with layer adapted meshes like Shishkin and  Bakhvalov-Shishkin meshes.  For this method we derive an error estimate of order $O(N^{-1}\ln^{3/2}{N})$  in the energy norm.\\
In this paper,  we consider the system of singularly perturbed BVP with discontinuous source term
\begin{eqnarray}\label{sys1}
P_1\bar u:= -\varepsilon u_1''(x)+b_1(x)u_1'(x)+a_{11}(x)u_1(x)+a_{12}(x)u_2(x) = f_1(x),\quad x\in (\Omega ^-\cup\Omega^+)\\ \label{sys2}
P_2\bar u:= -\varepsilon u_2''(x)+b_2(x)u_2'(x)+a_{21}(x)u_1(x)+a_{22}(x)u_2(x) = f_2(x), \quad
x\in (\Omega ^-\cup\Omega^+) \\ \label{sysbc}
 u_1(0)=0,\quad u_1(1)=0,\quad
u_2(0)=0, \quad  u_2(1)=0,&
\end{eqnarray}
with the following conditions.
\begin{eqnarray}\label{sysc1}
b_1(x)\geq \beta_1 > 0,  \quad b_2(x) \geq \beta_2 > 0, \\ \label{sysc2}
 a_{12}(x) \leq 0, \quad a_{21}(x) \leq 0, \\ \label{sysc3}
a_{11}(x) > \vert a_{21}(x) \vert , \quad a_{22}(x) > \vert a_{12}(x) \vert , \quad \forall x \in \bar\Omega,
 \end{eqnarray}
 $ A = [a_{ij}], i=1,2; j=1,2 \,$  satisfies the property
 \begin{equation}\label{sysc4}
  \xi^T A \xi \geq \alpha  \xi \xi^T \quad \text{for every} \quad \xi=(\xi_1,\xi_2) \in \Re^2.
 \end{equation}
 For $ k=1,2 $
 \begin{equation}\label{sysc5}
\alpha -  \frac{1}{2} b_{k}' \geq \sigma_k ,\quad \text{for some} \quad \alpha, \sigma_k >0.
\end{equation}
where $\varepsilon >0$ is a small parameter,  $\Omega=(0,1),$ $\Omega^-=(0,d),$ $\Omega^+=(d,1),$ $d\in \Omega, $ and $u_1,u_2 \in U \equiv  C^0(\bar{\Omega})\cap C^1(\Omega)\cap C^2(\Omega^- \cup \Omega^+),$ $\bar u = (u_1,u_2)^T$.  Further it is assumed that the source terms $f_1,f_2$ are sufficiently smooth on $ \bar{\Omega} \setminus  \{d\};$  both the functions $f_1(x)$ and $f_2(x)$ are assumed to have a single discontinuity at the point $d\in \Omega.$   In general this discontinuity gives rise to interior layers in the solution of the problem.  Because $f_i,i=1,2$ are discontinuous at $d$   the solution $\bar u$ of (\ref{sys1}) - (\ref{sysbc}) does not necessarily have a continuous second derivative at the point $d.$  That is $u_1,u_2 \notin C^2(\Omega).$   But the first derivative of the solution exists and is continuous.
The authors from \cite{tamram07a} proved almost first order of convergence with respect to $\varepsilon$ on a Shishkin mesh of the finite difference method with special discretization in the point $d.$
\begin{remark}
Through out this paper,  $C,\,C_1$ denote  generic constants that are independent of the parameter $\varepsilon$ and $N,$  the dimension of the discrete problem.  We also assume $\varepsilon \leq CN^{-1}$ as is generally the case in practice.
\end{remark}
For our later analysis it is useful to have a decomposition of $\bar u$ in the smooth part $\bar v$ and the layer part $\bar w.$  That is
\begin{equation*}
\bar u = \bar v + \bar w_1 + \bar w_2, \quad  \text{where}  \quad \bar v=(v_1,v_2), \quad \bar w_1=(w_{11},w_{12}), \quad \bar w_2=(w_{21},w_{22}).
\end{equation*}
\begin{theorem}
With the decomposition of the above,  for each $k,$ $0 \leq k \leq 3,$ and $j=1,2$ it holds
\begin{eqnarray*}
\vert v_{j}^{(k)}(x) \vert \leq C(1+ \varepsilon^{(2-k)}), \quad x \in \Omega,\\
\vert w_{1j}^{(k)}(x) \vert \leq C \varepsilon^{-k}e^{\frac{-\beta (1-x)}{\varepsilon}}, \quad x \in \bar \Omega,\\
\vert w_{2j}^{(k)}(x) \vert \leq \begin{cases}
C \varepsilon^{(1-k)}e^{\frac{- \beta (d-x)}{\varepsilon}}, \quad x \in \Omega^-,\\
C \varepsilon^{(1-k)}e^{\frac{-\beta(1-x)}{\varepsilon}}, \quad x \in \Omega^+,
                               \end{cases}
\end{eqnarray*}
where $\beta = \min \{ \beta_1,\beta_2 \}.$
\end{theorem}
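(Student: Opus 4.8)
\medskip
\noindent\emph{Sketch of a proof.}
The plan is to use the classical solution decomposition for singularly perturbed convection--diffusion problems: build $\bar v$ from a truncated asymptotic expansion, build $\bar w_1$ and $\bar w_2$ as corrections that each solve a system closely related to $(P_1,P_2)$, and control everything by a comparison (maximum) principle for the coupled operator, whose validity rests on the M-matrix structure encoded in (\ref{sysc2})--(\ref{sysc3}) and the positive definiteness (\ref{sysc4}). Since $b_1,b_2>0$, the reduced system is first order with data prescribed at the inflow point $x=0$, so the strong exponential layers sit at the outflow point $x=1$; these make up $\bar w_1$. The jump of $\bar f$ at $d$ forces a jump of order one in the first derivative of the reduced solution there, and correcting this by a diffusion-scale ($O(\varepsilon)$ wide) layer produces an interior layer of amplitude only $O(\varepsilon)$ --- a weak layer --- which, together with an exponentially small contribution near $x=1$, makes up $\bar w_2$.

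First I would define, on each of $\Omega^-$ and $\Omega^+$ separately, $\bar v=\bar v_0+\varepsilon\bar v_1+\varepsilon^2\bar v_2$ (retaining three terms suffices for bounds up to the third derivative). With $B=\mathrm{diag}(b_1,b_2)$ and $A=[a_{ij}]$, the leading term solves the reduced system $B\bar v_0'+A\bar v_0=\bar f$ with $\bar v_0(0)=0$ on $\Omega^-$ and interface value $\bar v_0(d^+)=\bar v_0(d^-)$ on $\Omega^+$, and each correction solves $B\bar v_i'+A\bar v_i=\bar v_{i-1}''$, its value at the left end of the subinterval chosen so that the $O(\varepsilon)$ mismatch generated by the interior layer is absorbed into $\varepsilon\bar v_1$ (this is exactly what makes the interior layer weak). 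Because $A$ is invertible by (\ref{sysc4}) and $\bar f$ is smooth on $\bar\Omega\setminus\{d\}$, differentiating these first-order systems repeatedly yields $\varepsilon$-uniform a priori bounds on the $\bar v_i^{(k)}$, which assemble into $|v_j^{(k)}(x)|\le C(1+\varepsilon^{2-k})$ for $0\le k\le3$; moreover the residual $P_m\bar v-f_m=-\varepsilon^{3}(v_2)_m''$ is $O(\varepsilon^{3})$ on $\Omega^-\cup\Omega^+$.

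Next I would introduce the layers so that $\bar u=\bar v+\bar w_1+\bar w_2$. Let $\bar w_1$ solve $P_m\bar w_1=0$ on $\Omega^-\cup\Omega^+$ with $\bar w_1(0)=0$, $\bar w_1(1)=-\bar v(1)$, and $\bar w_1,\bar w_1'$ continuous at $d$; the estimate $|\bar w_1(x)|\le Ce^{-\beta(1-x)/\varepsilon}$ follows from the comparison principle applied with the vector barrier $Ce^{-\beta(1-x)/\varepsilon}(1,1)^T$, where $\beta=\min\{\beta_1,\beta_2\}$ lets one exponential dominate both components and (\ref{sysc2})--(\ref{sysc3}) keep the couplings harmless. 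Then $\bar w_2:=\bar u-\bar v-\bar w_1$ solves $P_m\bar w_2=\varepsilon^{3}(v_2)_m''$ with $\bar w_2(0)=\bar w_2(1)=0$, and at $d$ it carries the residual data $[\bar w_2'](d)=-[\bar v'](d)=O(1)$ and $[\bar w_2](d)=-[\bar v](d)=O(\varepsilon)$. The barriers $C\varepsilon\,e^{-\beta(d-x)/\varepsilon}$ on $\Omega^-$ (whose derivative is $O(1)$ at $d^-$, matching the jump) and $C\varepsilon\,e^{-\beta(1-x)/\varepsilon}$ on $\Omega^+$, together with the comparison principle, give the asserted bound on $\bar w_2$ for $k=0$. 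For the derivative bounds ($1\le k\le3$, for both $\bar w_1$ and $\bar w_2$) I would differentiate the respective systems $k$ times --- each differentiation leaves a convection--diffusion system with the same $B$ and a right-hand side built from already-estimated lower-order derivatives, and here (\ref{sysc5}) keeps the reduced coefficients well behaved --- and reapply the comparison principle with barriers of the same exponential type carrying the appropriate powers of $\varepsilon^{-1}$; equivalently one rescales to the stretched variables $(1-x)/\varepsilon$ and $(d-x)/\varepsilon$. Finally, $\bar u$ and $\bar v+\bar w_1+\bar w_2$ solve the same problem (\ref{sys1})--(\ref{sysbc}), hence agree by uniqueness.

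The main obstacle is twofold. First one must establish the comparison principle for the coupled, piecewise-defined operator on $\Omega^-\cup\Omega^+$ with interface conditions at $d$, showing that the sign pattern (\ref{sysc2})--(\ref{sysc3}) together with (\ref{sysc4})--(\ref{sysc5}) yields the required M-matrix / maximum-principle behaviour and that this survives differentiating the system up to three times. Second, and more delicate, is the bookkeeping at $d$: one must verify that the order-one jump of $\bar v_0'$ is compensated precisely by an interior layer of amplitude $O(\varepsilon)$ to the left of $d$ plus an $O(\varepsilon)$ adjustment of the smooth part on $\Omega^+$, so that $\bar w_2$ genuinely decays like $\varepsilon\,e^{-\beta(d-x)/\varepsilon}$ on $\Omega^-$ and is only $\varepsilon$ times exponentially small on $\Omega^+$. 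Carrying this weak-interior-layer accounting through the three differentiations is where most of the technical effort lies.
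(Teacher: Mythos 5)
Your sketch follows essentially the route the paper itself relies on: the paper offers no detailed proof, only an appeal to the system estimates of \cite{linsmad03} and the decomposition/barrier technique of \cite{mil2000}, and your construction (reduced expansion for $\bar v$, outflow layer $\bar w_1$ at $x=1$ controlled by a vector barrier $Ce^{-\beta(1-x)/\varepsilon}(1,1)^T$ using the M-matrix structure \eqref{sysc2}--\eqref{sysc4}, and a weak $O(\varepsilon)$ interior layer $\bar w_2$ generated by the $O(1)$ jump of $\bar v_0'$ at $d$) is exactly that technique, so the overall plan is sound.

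Two steps, however, need repair as literally stated. First, the bookkeeping of the remainder: you push the residual $P_m\bar v-f_m=-\varepsilon^{3}(v_2)_m''$ into $\bar w_2$, but the resulting contribution is only algebraically small ($O(\varepsilon^{3-k})$ for the $k$-th derivative) and does \emph{not} decay like $e^{-\beta(d-x)/\varepsilon}$ away from $d$; at, say, $x=d/2$ the claimed bound on $w_{2j}$ would then be violated. The standard fix is to absorb that remainder into the smooth part (e.g.\ define the final correction as the solution of the full singularly perturbed problem with bounded data), which is precisely why the theorem's smooth-part bound reads $C(1+\varepsilon^{2-k})$ rather than $C$. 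Second, the derivative bounds for the layer parts cannot be obtained by ``differentiating the system and reapplying the comparison principle'': the differentiated system has zero-order matrix $A+\mathrm{diag}(b_1',b_2')$ together with coupling through $A'$, which need not inherit the sign conditions \eqref{sysc2}--\eqref{sysc4}, and, more fundamentally, you have no a priori boundary or interface values for $w_{1j}'$, $w_{2j}'$ against which to compare a barrier. The cited technique instead derives $|w'|\le C\varepsilon^{-1}\times(\text{amplitude})$ from a mean-value/local argument on intervals of width $O(\varepsilon)$ (equivalently, the stretched-variable rescaling you mention parenthetically, followed by classical local a priori estimates) and then bootstraps $w''$ and $w'''$ directly from the differential equation using the already-established lower-order bounds. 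With those two substitutions your outline becomes a complete proof along the lines the paper intends.
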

\begin{proof}
Using the results of \cite{linsmad03} and adopting the technique of \cite{mil2000} this theorem can be proved.
\end{proof}
This paper is organized as follows.  Section $2$ presents a weak formulation of the BVP (\ref{sys1}) - (\ref{sysbc}).   We define an energy norm on $(H_0^1(\Omega))^2$ and  discribe a finite element discretization of the problem.  Section $3$  presents an analysis of the  corresponding scheme on Shishkin and Bakhvalov-Shishkin meshes.  In section $4,$  we present  an interpolation error on various norms.  The paper concludes with numerical examples.
\section{Analytical results}
A standard weak formulation of (\ref{sys1})-(\ref{sysbc}) is: Find $u_1, u_2 \in H_0^1(\Omega)$ such that
\begin{eqnarray}\label{wsys1}
B_1(u_1,v_1) = f_1(v_1),\quad  \forall v_1 \in H_0^1(\Omega)\\ \label{wsys2}
B_2(u_2,v_2) = f_2(v_2),\quad  \forall v_2 \in H_0^1(\Omega)
\end{eqnarray}
where
\begin{eqnarray*}
B_1(u_1,v_1):= \varepsilon(u_1',v_1')+(b_1u_1',v_1)+(a_{11}u_1+a_{12}u_2,v_1),\\
B_2(u_2,v_2):= \varepsilon(u_2',v_2')+(b_2u_2',v_2)+(a_{21}u_1+a_{22}u_2,v_2)
\end{eqnarray*}
and
\begin{eqnarray*}
f_1(v_1) = (f_1,v_1),\\
f_2(v_2) = (f_2,v_2).
\end{eqnarray*}
Here $H_0^1(\Omega)$ denotes the usual Sobolev space and $(.,.)$ is the inner product on $L_2(\Omega).$  Now we combine the two equations (\ref{wsys1}) - (\ref{wsys2}) and get a single weak formulation.  Then our problem is:  Find $\bar u \in (H_0^1(\Omega))^2 $ such that
\begin{equation}\label{wprob}
B(\bar u, \bar v) = f(\bar v), \quad  \forall \bar v \in (H_0^1(\Omega))^2
\end{equation}
with  $B(\bar u, \bar v) := B_1(u_1,v_1)+B_2(u_2,v_2) \quad$ and $\quad
f(\bar v) := f_1(v_1)+f_2(v_2).$
Now we define a norm on $(H_0^1(\Omega))^2 $ associated with the bilinear form $B(.,.)$,  called continuous energy norm as
$\vert\vert\vert \bar u \vert\vert\vert_{H_0^1} = [\varepsilon (\vert u_1 \vert_1^2 + \vert u_2 \vert_1^2) +\sigma(\Vert u_1 \Vert_0^2 + \Vert u_2 \Vert_0^2)]^{1/2},$
where $\sigma = \min\{\sigma_1,\sigma_2\} $ and  $ \Vert u \Vert_0 :=(u,u)^{1/2}$ is the standard norm on $L_2(\Omega),$  while
$\vert u \vert_1 := \Vert u' \Vert_0 $ is the usual semi-norm on $ H_0^1(\Omega). $  We also use the notation $\Vert \bar u \Vert_0 = (\Vert u_1 \Vert_0^2 + \Vert u_2 \Vert_0^2)^{1/2}$ for the norm in $ (L_2(\Omega))^2.$\\
$B$ is a bilinear functional defined on $(H_0^1(\Omega))^2.$  Further we have to prove that it is coercive with respect to $\vert\vert\vert . \vert\vert\vert_{H_0^1}, $  that is
 $ B(\bar u, \bar u) \geq \vert\vert\vert \bar u \vert\vert\vert_{H_0^1}^2.$
 \begin{lemma}\label{coerlem}
A bilinear functional $B$ satisfies the coercive property with respect to $\vert\vert\vert . \vert\vert\vert_{H_0^1}.$
 \end{lemma}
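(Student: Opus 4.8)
The plan is to compute $B(\bar u,\bar u)$ directly from its definition and bound it below term by term, invoking the structural hypotheses (\ref{sysc1})--(\ref{sysc5}). Writing everything out,
\[
B(\bar u,\bar u)=\varepsilon\big(\|u_1'\|_0^2+\|u_2'\|_0^2\big)+(b_1u_1',u_1)+(b_2u_2',u_2)+\int_0^1 \bar u^{T}A\,\bar u\,dx,
\]
where the last integral collects the four reaction contributions $a_{11}u_1^2+a_{12}u_1u_2+a_{21}u_1u_2+a_{22}u_2^2$. The first group is already exactly $\varepsilon\big(|u_1|_1^2+|u_2|_1^2\big)$, so the work is entirely in the convection and reaction parts.

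For the convection terms I would integrate by parts. Since $u_k\in H_0^1(\Omega)$, we have $(b_ku_k',u_k)=\tfrac12\int_0^1 b_k\,(u_k^2)'\,dx=\tfrac12\big[b_ku_k^2\big]_0^1-\tfrac12\int_0^1 b_k'\,u_k^2\,dx=-\tfrac12(b_k'u_k,u_k)$, the boundary term vanishing because $u_k(0)=u_k(1)=0$. The only point to note here is that $u_k^2\in W^{1,1}(\Omega)$ even though $\bar u$ need not be $C^2$ across the point $d$, so this manipulation is legitimate. Hence the combined convection-plus-reaction part equals $\int_0^1\big[\bar u^{T}A\,\bar u-\tfrac12 b_1'u_1^2-\tfrac12 b_2'u_2^2\big]\,dx$.

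Next, (\ref{sysc4}) supplies the pointwise bound $\bar u^{T}A\,\bar u\ge\alpha(u_1^2+u_2^2)$, so the integrand is at least $(\alpha-\tfrac12 b_1')u_1^2+(\alpha-\tfrac12 b_2')u_2^2$. Applying (\ref{sysc5}) with $k=1,2$ this is $\ge\sigma_1u_1^2+\sigma_2u_2^2\ge\sigma(u_1^2+u_2^2)$ with $\sigma=\min\{\sigma_1,\sigma_2\}$. Integrating over $\Omega$ gives $\sigma\big(\|u_1\|_0^2+\|u_2\|_0^2\big)$, and adding back the $\varepsilon$-terms yields $B(\bar u,\bar u)\ge\varepsilon\big(|u_1|_1^2+|u_2|_1^2\big)+\sigma\big(\|u_1\|_0^2+\|u_2\|_0^2\big)=\vert\vert\vert\bar u\vert\vert\vert_{H_0^1}^2$, which is the claim.

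There is no genuine obstacle; this is a routine energy estimate. The two places that deserve a little care are the integration by parts in the presence of the interior discontinuity at $d$ (resolved by working in $H_0^1(\Omega)$, where $u_k^2$ is absolutely continuous and the endpoint terms drop out by the homogeneous Dirichlet data) and keeping the sign conditions $a_{12},a_{21}\le 0$ together with (\ref{sysc3}) in view, since these are precisely what make $\xi^{T}A\xi$ positive definite and thereby underpin (\ref{sysc4}).
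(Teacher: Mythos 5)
Your proof is correct and follows essentially the same route as the paper: bound the reaction terms pointwise via the coercivity assumption (\ref{sysc4}), integrate the convection terms by parts using the homogeneous Dirichlet data, and invoke (\ref{sysc5}) with $\sigma=\min\{\sigma_1,\sigma_2\}$. Your added remarks on the legitimacy of the integration by parts across $x=d$ are a welcome but inessential refinement of the paper's argument.
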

 \begin{proof}
 Let $\bar u = (u_1,u_2) \in (H_0^1(\Omega))^2.$  Then
\begin{eqnarray*}
B(\bar u, \bar u) & = & \varepsilon(u_1',u_1')+(b_1u_1',u_1) +(a_{11}u_1+a_{12}u_2,u_1) +\varepsilon(u_2',u_2') + (b_2u_2',u_2)\\
&  & +(a_{21}u_1+a_{22}u_2,u_2)\\
& \geq & \varepsilon(\vert u_1 \vert_1^2 + \vert u_2 \vert_1^2)+\int_0^1 b_1(x)u_1'u_1 dx
 + \int_0^1 b_2(x)u_2'u_2 dx  + (\alpha u_1,u_1)\\
 &  & +(\alpha u_2,u_2)\\
& = & \varepsilon(\vert u_1 \vert_1^2 + \vert u_2 \vert_1^2)+ \int_0^1 \frac{b_1(x)}{2}\frac{d}{dx}(u_1^2)+ \int_0^1 \alpha u_1^2 dx
+\int_0^1 \frac{b_2(x)}{2}\frac{d}{dx}(u_2^2)\\
&   & + \int_0^1 \alpha u_2^2 dx\\
& = & \varepsilon(\vert u_1 \vert_1^2 + \vert u_2 \vert_1^2) - \frac{1}{2} \int_0^1 u_1^2 d(b_1(x))+ \int_0^1 \alpha u_1^2 dx
 -  \frac{1}{2} \int_0^1 u_2^2 d(b_2(x)) \\
 &  & + \int_0^1 \alpha u_2^2 dx
 \end{eqnarray*}
\begin{eqnarray*}
& = & \varepsilon(\vert u_1 \vert_1^2 + \vert u_2 \vert_1^2) + \int_0^1(\alpha - \frac{1}{2} b_1'(x))u_1^2 dx + \int_0^1(\alpha - \frac{1}{2} b_2'(x))u_2^2 dx\\
& \geq &  \varepsilon(\vert u_1 \vert_1^2 + \vert u_2 \vert_1^2) + \min\{\sigma_1,\sigma_2\} [\int_0^1  u_1^2 dx + \int_0^1  u_1^2 dx]\\
B(\bar u, \bar u) & \geq &  \varepsilon(\vert u_1 \vert_1^2 + \vert u_2 \vert_1^2) + \sigma(\Vert u_1 \Vert_0^2 + \Vert u_2 \Vert_0^2)\\
 \end{eqnarray*}
\quad Therefore we have
\begin{equation*}
B(\bar u, \bar u)  \geq  \vert\vert\vert \bar u \vert\vert\vert^2.
\end{equation*}
Hence $B$ is coercive with respect to $\vert\vert\vert . \vert\vert\vert.$
 \end{proof}
Also $B$ is continuous in the energy norm and $f$ is a bounded linear functional on $(H_0^1(\Omega))^2.$
 By Lax-Milgram Theorem,  we conclude that the problem (\ref{wprob}) has a unique solution.
\subsection{Discretization of weak problem}
 Let $\Omega_\varepsilon^N=\{x_0,x_1,\cdots, x_N\}$ to be the set of mesh
 points $x_i$, for some positive integer $N$. For
 $i\in \{1,2,\cdots,N\}.$  We set $h_i=x_i-x_{i-1}$ to be the local
mesh step size, and for $i\in \{1,2,\cdots,N\}$\, let
$\bar{h_i}=(h_i+h_{i+1})/2$.
Let $V_h \subset H_0^1(\Omega)$ be the space of piecewise linear functions on $\Omega$.
As usual, basis functions of $V_h$ are given by
 \begin{equation*}
   \phi_i(x) =
   \begin{cases}
   \frac{x-x_{i-1}}{h_i}, \quad x\in [x_{i-1},x_i]\\
   \frac{x_{i+1}-x}{h_{i+1}}, \quad x\in [x_{i},x_{i+1}]\\
   0, \quad x \notin [x_{i-1},x_{i+1}].
 \end{cases}
   \end{equation*}
Then our discretization of (\ref{wprob}) is: Find $\bar u_h \in V_h^2$ such that
\begin{equation}\label{dissys}
B_h(\bar u_h, \bar v_h) = f_h(\bar v_h), \quad  \forall \bar v_h \in V_h^2,
\end{equation}
   where
\begin{align*}
B_h(\bar u_h, \bar v_h):= & (\varepsilon u_{1h}', v_{1h}')+(b_1u_{1h}',v_{1h})+(a_{11}u_{1h}+a_{12}u_{2h},v_{1h}) +(\varepsilon u_{2h}', v_{2h}')\\
& +(b_2u_{2h}',v_{2h}) + (a_{21}u_{1h}+a_{22}u_{2h},v_{2h}) \\
& + \sum_{i=1}^{N}\int_{x_{i-1}}^{x_{i}}\delta_{1,i}(-\varepsilon u_{1h}''(x)+b_1(x)u_{1h}'(x)+a_{11}(x)u_{1h}(x)+a_{12}(x)u_{2h}(x))b_1v_{1h}'dx\\
&+\sum_{i=1}^{N}\int_{x_{i-1}}^{x_{i}}\delta_{2,i}(-\varepsilon u_{2h}''(x)+b_2(x)u_{2h}'(x)+a_{21}(x)u_{1h}(x)+a_{22}(x)u_{2h}(x))b_2v_{2h}'dx\\
f_h(\bar v_h):=&(f_1,v_{1h})+(f_2,v_{2h})+\sum_{i=1}^{N}\int_{x_{i-1}}^{x_{i}}\delta_{1,i}f_1b_1v_{1h}'+\sum_{i=1}^{N}\int_{x_{i-1}}^{x_{i}}\delta_{2,i}f_2b_2v_{2h}'dx.
\end{align*}
The parameters $\delta_{1,i} \geq 0$ and $\delta_{2,i} \geq 0$ are called the streamline-diffusion parameters and will be determined later.
Here we define a discrete energy norm on $V_h^2$ associated with the bilinear form $B_h(.,.)$ as
\begin{eqnarray*}
\vert\vert\vert \bar u_h \vert\vert\vert_{V_h} & = &[\varepsilon (\vert u_{1h} \vert_1^2 + \vert u_{2h} \vert_1^2) +\sigma(\Vert u_{1h} \Vert_0^2 + \Vert u_{2h} \Vert_0^2)
+\sum_{i=1}^{N}\int_{x_{i-1}}^{x_{i}}\delta_{1,i}b_1^2(x_i)(u_{1h}'(x))^2 dx\\
&  & +\sum_{i=1}^{N}\int_{x_{i-1}}^{x_{i}}\delta_{2,i}b_2^2(x_i)(u_{2h}'(x))^2 dx]^{1/2}.
\end{eqnarray*}
$B_h$ is a bilinear functional defined on $V_h^2.$   Further we have to prove that it is coercive with respect to $\vert\vert\vert . \vert\vert\vert_{V_h},$   that is
 $ B_h(\bar u_h, \bar u_h) \geq \vert\vert\vert \bar u_h \vert\vert\vert_{V_h}^2.$
\begin{lemma}\label{coerlem1}
If $\quad \delta_{1,i} = \delta_{2,i} = 0\,\,\quad $ then $\quad  B_h(\bar u_h, \bar u_h) \geq \vert\vert\vert \bar u_h \vert\vert\vert_{V_h}^2 $\\ and
if $\quad  0 < \delta_{1,i}, \delta_{2,i} \leq \frac{1}{4} \min_{i=1,2} \{ \frac{\sigma_i}{\mu^2}\},\quad \mu = \max_{x \in \bar \Omega} \{ \mid a_{ij}(x) \mid \}, i,j=1,2\quad$  then $ \quad B_h(\bar u_h, \bar u_h) \geq \frac{1}{2}\vert\vert\vert \bar u_h \vert\vert\vert_{V_h}^2. $  That is,  a  bilinear functional $B_h$ satisfies the coercive property with respect to $\vert\vert\vert . \vert\vert\vert_{V_h}.$
 \end{lemma}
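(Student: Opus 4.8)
The plan is to treat the two cases of the statement separately; the first is immediate and the second reduces to Lemma~\ref{coerlem} plus a Young-type absorption.

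\emph{The case $\delta_{1,i}=\delta_{2,i}=0$.} Then both streamline-diffusion sums in $B_h$ and the two extra sums in $\vert\vert\vert\cdot\vert\vert\vert_{V_h}$ vanish, so $B_h$ restricted to $V_h^2\subset(H_0^1(\Omega))^2$ coincides with $B$ and $\vert\vert\vert\cdot\vert\vert\vert_{V_h}$ coincides with $\vert\vert\vert\cdot\vert\vert\vert_{H_0^1}$. Hence $B_h(\bar u_h,\bar u_h)\ge\vert\vert\vert\bar u_h\vert\vert\vert_{V_h}^2$ is exactly Lemma~\ref{coerlem} applied to $\bar u_h\in V_h^2$.

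\emph{The case $0<\delta_{1,i},\delta_{2,i}\le\frac14\min_i\{\sigma_i/\mu^2\}$.} First I would write $B_h(\bar u_h,\bar u_h)=B(\bar u_h,\bar u_h)+S_1+S_2$, where $S_k$ is the $k$-th streamline-diffusion sum evaluated at $\bar v_h=\bar u_h$. Since $u_{1h},u_{2h}$ are piecewise linear, their second derivatives vanish on every element interior, so the $-\varepsilon u_{kh}''$ contributions drop out and, after freezing the convection coefficient $b_k$ at the downwind node $x_i$ on each element (which is what produces the coefficient $b_k^2(x_i)$ occurring in $\vert\vert\vert\cdot\vert\vert\vert_{V_h}$), one is left with
\[
S_k=\underbrace{\sum_{i=1}^{N}\delta_{k,i}b_k^2(x_i)\!\int_{x_{i-1}}^{x_i}\!(u_{kh}')^2\,dx}_{=:D_k}+\underbrace{\sum_{i=1}^{N}\delta_{k,i}b_k(x_i)\!\int_{x_{i-1}}^{x_i}\!\big(a_{k1}u_{1h}+a_{k2}u_{2h}\big)u_{kh}'\,dx}_{=:R_k}.
\]
By construction $D_1+D_2$ is precisely the streamline-diffusion part of $\vert\vert\vert\bar u_h\vert\vert\vert_{V_h}^2$, so the whole task is to dominate $|R_1|+|R_2|$.

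For the cross terms I would apply Young's inequality pointwise in the form $|pq|\le\frac12 p^2+\frac12 q^2$ with $p=b_k(x_i)u_{kh}'$ and $q=a_{k1}u_{1h}+a_{k2}u_{2h}$, estimate $q^2\le 2\mu^2(u_{1h}^2+u_{2h}^2)$ from the definition of $\mu$, sum over $i$, and use the hypothesis $\delta_{k,i}\le\frac{1}{4\mu^2}\min\{\sigma_1,\sigma_2\}=\frac{\sigma}{4\mu^2}$; this yields $|R_k|\le\frac12 D_k+\frac{\sigma}{4}\big(\Vert u_{1h}\Vert_0^2+\Vert u_{2h}\Vert_0^2\big)$ for $k=1,2$. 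Plugging this together with the intermediate estimate $B(\bar u_h,\bar u_h)\ge\varepsilon(|u_{1h}|_1^2+|u_{2h}|_1^2)+\sigma_1\Vert u_{1h}\Vert_0^2+\sigma_2\Vert u_{2h}\Vert_0^2$ (read off from the proof of Lemma~\ref{coerlem}, stopping before the minimum is taken) into $B_h=B+S_1+S_2$, and using $\sigma_k-\frac{\sigma}{2}\ge\frac{\sigma}{2}$ since $\sigma=\min\{\sigma_1,\sigma_2\}$, everything collapses to
\[
B_h(\bar u_h,\bar u_h)\ge\varepsilon(|u_{1h}|_1^2+|u_{2h}|_1^2)+\tfrac{\sigma}{2}\big(\Vert u_{1h}\Vert_0^2+\Vert u_{2h}\Vert_0^2\big)+\tfrac12(D_1+D_2)\ge\tfrac12\vert\vert\vert\bar u_h\vert\vert\vert_{V_h}^2 ,
\]
which is the assertion.

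The delicate point — and the reason for the explicit threshold $\frac14\min_i\{\sigma_i/\mu^2\}$ — is the bookkeeping of constants: each of $R_1,R_2$ has to be charged against the zero-order mass $\sigma_1\Vert u_{1h}\Vert_0^2+\sigma_2\Vert u_{2h}\Vert_0^2$, and the Young split together with the size restriction on $\delta_{k,i}$ must be calibrated so that the two of them jointly consume at most half of that mass, leaving half of it, half of $D_1+D_2$, and the whole $\varepsilon$-seminorm to rebuild $\frac12\vert\vert\vert\cdot\vert\vert\vert_{V_h}^2$; a larger threshold destroys this balance. The elementwise freezing of $b_k$ at $x_i$ is cosmetic: keeping $b_k(x)$ instead perturbs $D_k$ only by an $O(h_i)$ relative factor, which is absorbed in the same step at the cost of an $N$-independent constant. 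Everything else is the routine manipulation already exhibited in the proof of Lemma~\ref{coerlem}.
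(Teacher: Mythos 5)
Your proposal is correct and follows essentially the same route as the paper: the $\delta=0$ case is reduced to Lemma~\ref{coerlem}, and for $0<\delta_{k,i}\le\frac{\sigma}{4\mu^2}$ the piecewise-linearity kills the $-\varepsilon u_{kh}''$ terms, the quadratic streamline terms are retained, and the cross terms are absorbed via Young's inequality and the bound $|a_{ij}|\le\mu$ into $\frac{\sigma}{4}(\Vert u_{1h}\Vert_0^2+\Vert u_{2h}\Vert_0^2)+\frac12 D_k$, yielding $B_h(\bar u_h,\bar u_h)\ge\frac12\vert\vert\vert\bar u_h\vert\vert\vert_{V_h}^2$ exactly as in the paper's proof. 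Your explicit remark about freezing $b_k$ at $x_i$ is a refinement the paper silently glosses over, but it does not change the argument.
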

 \begin{proof}
Let $\bar u_h = (u_{1h},u_{2h}) \in V_h^2.$  If $\quad \delta_{1,i} = \delta_{2,i} = 0 \quad $ then the result directly follows from Lemma (\ref{coerlem}).\\
If  $\quad  0 < \delta_{1,i}, \delta_{2,i} \leq \frac{1}{4} \min_{i=1,2} \{ \frac{\sigma_i}{\mu^2}\}\quad $  then we have
\begin{eqnarray*}
B_h(\bar u_h, \bar u_h)& = & \varepsilon(u_{1h}',u_{1h}') +(b_1u_{1h}',u_{1h})+(a_{11}u_{1h}+a_{12}u_{2h},u_{1h})    +\varepsilon(u_{2h}',u_{2h}') +(b_2u_{2h}',u_{2h}) \\
&  & + (a_{21}u_{1h}+a_{22}u_{2h},u_{2h}) +\sum_{i=1}^{N} \int_{x_{i-1}}^{x_i} \delta_{1,i}(-\varepsilon u_{1h}''+b_1u_{1h}'+a_{11}u_{1h}+a_{12}u_{2h}) b_1u_{1h}'dx \\
&  & +\sum_{i=1}^{N} \int_{x_{i-1}}^{x_i} \delta_{2,i}(-\varepsilon u_{2h}''+b_2u_{2h}'+a_{21}u_{1h}+a_{22}u_{2h}) b_2u_{2h}'dx\\
& \geq & \varepsilon(\vert u_{1h} \vert_1^2 + \vert u_{2h} \vert_1^2)+\int_0^1 b_1(x)u_{1h}'u_{1h} dx
 + \int_0^1 b_2(x)u_{2h}'u_{2h} dx  + \int_0^1 \alpha u_{1h}^2 dx\\
 &  & +\int_0^1 \alpha u_{2h}^2 dx +\sum_{i=1}^{N} \int_{x_{i-1}}^{x_i} \delta_{1,i}(b_1^2(u_{1h}')^2)dx \\
&  & + \sum_{i=1}^{N} \int_{x_{i-1}}^{x_i} \delta_{1,i}(a_{11}u_{1h}+a_{12}u_{2h}) b_1u_{1h}'dx + \sum_{i=1}^{N} \int_{x_{i-1}}^{x_i} \delta_{2,i}(b_2^2(u_{2h}')^2)dx  \\
&  & +  \sum_{i=1}^{N} \int_{x_{i-1}}^{x_i} \delta_{2,i}(a_{21}u_{1h}+a_{22}u_{2h}) b_2u_{2h}'dx\\
& \geq & \varepsilon(\vert u_{1h} \vert_1^2 + \vert u_{2h} \vert_1^2) + \int_0^1(\alpha - \frac{1}{2} b_1'(x))u_{1h}^2 dx + \int_0^1(\alpha - \frac{1}{2} b_2'(x))u_{2h}^2 dx\\
& & + \sum_{i=1}^{N} \int_{x_{i-1}}^{x_i} \delta_{1,i}(b_1^2(u_{1h}')^2)dx + \sum_{i=1}^{N} \int_{x_{i-1}}^{x_i} \delta_{2,i}(b_2^2(u_{2h}')^2)dx \\
&  & +\sum_{i=1}^{N} \int_{x_{i-1}}^{x_i} \delta_{1,i}(a_{11}u_{1h}+a_{12}u_{2h}) b_1u_{1h}'dx + \sum_{i=1}^{N} \int_{x_{i-1}}^{x_i} \delta_{2,i}(a_{21}u_{1h}+a_{22}u_{2h}) b_2u_{2h}'dx\\
B_h(\bar u_h, \bar u_h) & \geq &  \varepsilon(\vert u_{1h} \vert_1^2 + \vert u_{2h} \vert_1^2) + \sigma(\Vert u_{1h} \Vert_0^2 + \Vert u_{2h} \Vert_0^2) \\
&  & + \sum_{i=1}^{N} \int_{x_{i-1}}^{x_i} \delta_{1,i}(b_1^2(u_{1h}')^2)dx + \sum_{i=1}^{N} \int_{x_{i-1}}^{x_i} \delta_{2,i}(b_2^2(u_{2h}')^2)dx \\
&  & +\sum_{i=1}^{N} \int_{x_{i-1}}^{x_i} \delta_{1,i}(a_{11}u_{1h}+a_{12}u_{2h}) b_1u_{1h}'dx + \sum_{i=1}^{N} \int_{x_{i-1}}^{x_i} \delta_{2,i}(a_{21}u_{1h}+a_{22}u_{2h}) b_2u_{2h}'dx
 \end{eqnarray*}
 Using the assumption on $\delta_{1,i}$ and $\delta_{2,i},$  we obtain
 \begin{eqnarray*}
&  & \mid  \sum_{i=1}^{N} \int_{x_{i-1}}^{x_i} \delta_{1,i}(a_{11}u_{1h}+a_{12}u_{2h}) b_1u_{1h}'dx \mid \\ & \leq  &  \sum_{i=1}^{N} \int_{x_{i-1}}^{x_i} \delta_{1,i} \mid  a_{11}u_{1h}\mid ^2 dx + \sum_{i=1}^{N} \int_{x_{i-1}}^{x_i} \delta_{1,i}\mid a_{12}u_{2h} \mid^2 dx
 + \frac{1}{2} \sum_{i=1}^{N} \int_{x_{i-1}}^{x_i} \delta_{1,i} \mid b_1u_{1h}'\mid^2 dx \\
& \leq & \sum_{i=1}^{N} \int_{x_{i-1}}^{x_i} (\frac{\sigma}{4\mu^2}) \mu^2 \mid u_{1h}\mid ^2 dx + \sum_{i=1}^{N} \int_{x_{i-1}}^{x_i} (\frac{\sigma}{4\mu^2}) \mu^2\mid u_{2h} \mid^2 dx
+ \frac{1}{2} \sum_{i=1}^{N} \int_{x_{i-1}}^{x_i} \delta_{1,i} \mid b_1u_{1h}'\mid^2 dx \\
& = & \frac{\sigma}{4}(\Vert u_{1h} \Vert_0^2 + \Vert u_{2h} \Vert_0^2)+ \frac{1}{2} \sum_{i=1}^{N} \int_{x_{i-1}}^{x_i} \delta_{1,i}  (b_1 u_{1h}')^2 dx
\end{eqnarray*}
and similarly we have
\begin{equation*}
\mid  \sum_{i=1}^{N} \int_{x_{i-1}}^{x_i} \delta_{2,i}(a_{21}u_{1h}+a_{22}u_{2h}) b_2u_{2h}'dx \mid   \leq   \frac{\sigma}{4}(\Vert u_{1h} \Vert_0^2 + \Vert u_{2h} \Vert_0^2)+ \frac{1}{2} \sum_{i=1}^{N} \int_{x_{i-1}}^{x_i} \delta_{2,i}  (b_2 u_{1h}')^2 dx.
 \end{equation*}
Combining the above two results  we have the desired result.
Hence $B_h$ is coercive with respect to $\vert\vert\vert . \vert\vert\vert_{V_h}.$
 \end{proof}
Also $B_h$ is continuous in the discrete energy norm and $f_h$ is a bounded linear functional on $V_h^2.$  By Lax-Milgram Theorem,  we conclude that the problem (\ref{dissys}) has a unique solution.\\
\begin{remark}
While deriving the corresponding difference scheme,  we use the SDFEM with lumping for the terms $ (a_{11}u_1+a_{12}u_2,v_1)$ and $(a_{21}u_1+a_{22}u_2,v_2).$  That is $(a_{11}u_1,v_1)$ is replaced by $\sum_{i=1}^{N-1}\bar h_i\widehat{a_{11,i}}u_{1,i}v_{1,i}$  where  $\widehat{a_{11,i}} = \frac{\bar b_1^2}{\beta_1^2} \Vert a_{11}\Vert_{\infty[x_i,x_{i-1}]}.$
\end{remark}
We choose $d = x_{N/2}$ and take $f_1(d) = f_1(N/2)=\frac{f_1(\frac{N}{2}-1)+f_1(\frac{N}{2}+1)}{2},\, \quad $ $f_2(d) = f_2(N/2)=\frac{f_2(\frac{N}{2}-1)+f_2(\frac{N}{2}+1)}{2}.$
Then the corresponding difference scheme is
\begin{equation}\label{difsch}
\begin{split}
L^N \bar U_{i} &:=
\begin{cases}
-\varepsilon[(\frac{U_{1,i+1}-U_{1,i}}{h_{i+1}}-\frac{U_{1,i}-U_{1,i-1}}{h_{i}})+(\frac{U_{2,i+1}-U_{2,i}}{h_{i+1}}-\frac{U_{2,i}-U_{2,i-1}}{h_{i}})]\\
+\alpha_{1,i}(\frac{U_{1,i+1}-U_{1,i}}{h_{i+1}})+\alpha_{2,i}(\frac{U_{2,i+1}-U_{2,i}}{h_{i+1}})\\
+\beta_{1,i}(\frac{U_{1,i}-U_{1,i-1}}{h_{i}})+\beta_{2,i}(\frac{U_{2,i}-U_{2,i-1}}{h_{i}})\\
+\gamma_{1,i}U_{1,i}+\gamma_{2,i}U_{2,i}= f_h(\bar \phi_i),
\end{cases}\\
&U_{1,0} = U_{1,N} = U_{2,0} = U_{2,N} = 0,
\end{split}
\end{equation}
where $\bar U_{i}=(U_{1,i},U_{2,i}),\,$ $U_{1,i}=U_1(x_i),\, U_{2,i}=U_2(x_i),\,\bar \phi_i = (\phi_i,\phi_i), \quad
i=1,2,...,N-1$ and
\begin{align*}
\alpha_{1,i}&= h_{i+1} \int_{x_{i}}^{x_{i+1}} (b_1\phi_{i+1}'\phi_{i}+\delta_{1,i+1}b_1^2\phi_{i+1}'\phi_{i}'+\delta_{1,i+1}b_1a_{11}\phi_{i+1}\phi_{i}'+\delta_{2,i+1}b_2a_{21}\phi_{i+1}\phi_{i}')dx\\
\beta_{1,i}&=-h_{i}\int_{x_{i-1}}^{x_{i}}(b_1\phi_{i-1}'\phi_{i}+\delta_{1,i}b_1^2\phi_{i-1}'\phi_{i}'+\delta_{1,i}b_1a_{11}\phi_{i-1}\phi_{i}'+\delta_{2,i}b_2a_{21}\phi_{i-1}\phi_{i}')dx\\
\gamma_{1,i}&=\bar h_{i}(\widehat{a_{11}}+\widehat{a_{21}})(x_i) +\int_{x_{i-1}}^{x_{i}}(\delta_{1,i}b_1a_{11}+\delta_{2,i}b_2a_{21})\phi_{i}'dx +\int_{x_{i}}^{x_{i+1}}(\delta_{1,i+1}b_1a_{11}+\delta_{2,i+1}b_2a_{21})\phi_{i}'dx\\
\alpha_{2,i}&= h_{i+1} \int_{x_{i}}^{x_{i+1}} (b_2\phi_{i+1}'\phi_{i}+\delta_{2,i+1}b_2^2\phi_{i+1}'\phi_{i}'+\delta_{1,i+1}b_1a_{12}\phi_{i+1}\phi_{i}'+\delta_{2,i+1}b_2a_{22}\phi_{i+1}\phi_{i}')dx\\
\beta_{2,i}&=-h_{i}\int_{x_{i}}^{x_{i+1}}(b_2\phi_{i-1}'\phi_{i}+\delta_{2,i}b_2^2\phi_{i-1}'\phi_{i}'+\delta_{1,i}b_1a_{12}\phi_{i-1}\phi_{i}'+\delta_{2,i}b_2a_{22}\phi_{i-1}\phi_{i}')dx\\
\gamma_{2,i}&=\bar h_{i}(\widehat{a_{12}}+\widehat{a_{22}})(x_i) +\int_{x_{i-1}}^{x_{i}}(\delta_{1,i}b_1a_{12}+\delta_{2,i}b_2a_{22})\phi_{i}'dx +\int_{x_{i}}^{x_{i+1}}(\delta_{1,i+1}b_1a_{12}+\delta_{2,i+1}b_2a_{22})\phi_{i}'dx.
\end{align*}
\begin{remark}
If the local  mesh step is small enough,  then it is possible to choose $\delta_{k,i} = 0, k=1,2.$  In other case,  we shall choose $\delta_{k,i}$ from the condition, $\alpha_{k,i}$ of the difference scheme (\ref{difsch}) equal to zero.    Thus we have
\begin{align*}
\delta_{1,i} = \begin{cases}
0, \quad \quad \quad \quad \quad \quad \quad h_i \leq \frac{2\varepsilon}{\parallel b_{1} \parallel_\infty},\\
\frac{b_1h_i(2b_2^2 +h_ib_2a_{22})-h_i^2b_2^2a_{21}}{(2b_1^2 +h_ib_1a_{11})(2b_2^2 +h_ib_2a_{22})-h_i^2b_1b_2a_{12}a_{21}}, h_i > \frac{2\varepsilon}{\parallel b_{1} \parallel_\infty}
               \end{cases}
\end{align*}
and also
\begin{align*}
\delta_{2,i} = \begin{cases}
0, \quad \quad \quad \quad \quad \quad \quad h_i \leq \frac{2\varepsilon}{\parallel b_{2} \parallel_\infty},\\
\frac{b_2h_i(2b_1^2 +h_ib_1a_{11})-h_i^2b_1^2a_{12}}{(2b_1^2 +h_ib_1a_{11})(2b_2^2 +h_ib_2a_{22})-h_i^2b_1b_2a_{12}a_{21}}, h_i > \frac{2\varepsilon}{\parallel b_{2} \parallel_\infty}.
               \end{cases}
\end{align*}
We derive the following estimates of $\delta_{1,i}$ and $\delta_{2,i}$
\begin{eqnarray*}
\delta_{k,i} \leq \begin{cases}
C N^{-1} \quad \text{for} \quad i=1,...,N/4 \quad\text{and}\quad i=(N/2)+1,...,3N/4,\\
0   \quad \quad \quad\text{for} \quad i=(N/4)+1,...,N/2 \quad\text{and}\quad i=(3N/4)+1,...,N-1,
                  \end{cases}
\end{eqnarray*}
where $k=1,2.$
\end{remark}

The above system  contains $N-1$ equations and has $2N-2$ unknowns.  To solve the system we split it into two algebraic systems as follows: \\
For $i=1,2,...,N-1$
\begin{align}\label{rsys1}
P_1^{N}U_{1,i}^{*}:=\begin{cases} -\varepsilon(\frac{U_{1,i+1}^*-U_{1,i}^*}{h_{i+1}}-\frac{U_{1,i}^*-U_{1,i-1}^*}{h_{i}})+\alpha_{1,i}(\frac{U_{1,i+1}^*-U_{1,i}^*}{h_{i+1}})+\beta_{1,i}(\frac{U_{1,i}^*-U_{1,i-1}^*}{h_{i}})\\
+\gamma_{1,i}U_{1,i}^*=\int_{x_{i-1}}^{x_{i+1}}f_1\phi_i+\sum_{i=1}^{N}\int_{x_{i-1}}^{x_{i}}\delta_{1,i}f_1b_1\phi_i',\quad U_{1,0}^*=U_{1,N}^*=0,
\end{cases}
\end{align}
\begin{align} \label{rsys2}
P_2^{N}U_{2,i}^{*}:=\begin{cases}
-\varepsilon(\frac{U_{2,i+1}^*-U_{2,i}^*}{h_{i+1}}-\frac{U_{2,i}^*-U_{2,i-1}^*}{h_{i}})+\alpha_{2,i}(\frac{U_{2,i+1}^*-U_{2,i}^*}{h_{i+1}})+\beta_{2,i}(\frac{U_{2,i}^*-U_{2,i-1}^*}{h_{i}})\\
+\gamma_{2,i}U_{2,i}^*=\int_{x_{i-1}}^{x_{i+1}}f_2\phi_i+\sum_{i=1}^{N}\int_{x_{i-1}}^{x_{i}}\delta_{2,i}f_2b_2\phi_i', \quad U_{2,0}^*=U_{2,N}^*=0.
\end{cases}
\end{align}
The above system (\ref{rsys1}) corresponds to the differential equation
 \begin{equation*}
P_1^* u_1^*:= -\varepsilon u_1^{*''}+ b_1(x)u_1^{*'}+(a_{11}(x)+a_{21}(x))u_1^* = f_1(x),\quad x \in (\Omega^- \cup \Omega^+),
 \end{equation*}
subject to boundary conditions $ u_1^*(0) = u_1^*(1) = 0.$  This boundary value problem has a unique solution \cite{rszar2004a}.  Using the inverse monotone property of the matrix,  one can establish the numerical stability of the system (\ref{rsys1}).  Similarly we can deal with second equation (\ref{rsys2}).  If $U_{1,i}^*$ and $U_{2,i}^*$ are solutions of (\ref{rsys1}) and   (\ref{rsys2}) respectively then $(U_{1,i}^*,U_{2,i}^*)$ is a solution of (\ref{difsch}).  By uniqueness,  this is the only possible solution.  Therefore, it is enough to solve (\ref{rsys1}) and (\ref{rsys2}).
   \section{Error analysis}
   The convergence analysis of the numerical scheme starts at the triangle inequality
   \begin{equation}\label{err1}
\vert\vert\vert \bar u-\bar u_h \vert\vert\vert_{V_h} \leq \vert\vert\vert \bar u-\bar u^I \vert\vert\vert_{V_h} + \vert\vert\vert \bar u^I - \bar u_h \vert\vert\vert_{V_h},
   \end{equation}
   where $\bar u^I$ denotes the piecewise linear interpolant to $\bar u$ on $\Omega$.\\
   Now we estimate the second term of equation (\ref{err1}).
   \begin{lemma}\label{intpl2}
   The following estimate holds true
 \begin{equation*}
\vert\vert\vert \bar u^I - \bar u_h \vert\vert\vert_{V_h} \leq C \Vert \bar u^I- \bar u \Vert_0.
\end{equation*}
   \end{lemma}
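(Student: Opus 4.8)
The plan is to exploit coercivity of $B_h$ (Lemma \ref{coerlem1}) against the discrete energy norm, together with the Galerkin orthogonality that $\bar u^I - \bar u_h$ enjoys when tested against elements of $V_h^2$. Write $\bar\chi = \bar u^I - \bar u_h \in V_h^2$. By the lower bound in Lemma \ref{coerlem1} (taking either the $\delta = 0$ case or the $\delta > 0$ case, which costs only a harmless factor of $2$), we have $\tfrac12\,\vert\vert\vert \bar\chi \vert\vert\vert_{V_h}^2 \le B_h(\bar\chi, \bar\chi)$. The idea is then to replace $\bar\chi$ in the first slot by $\bar u^I - \bar u$: since $\bar u$ solves the weak problem and $\bar u_h$ solves the discrete problem with the \emph{same} right-hand side $f_h$ when restricted to $V_h^2$ (the consistency of the SDFEM terms is what makes this work — the strong residual of the exact solution integrated against $b_k v_{kh}'$ reproduces $f_k$), one gets $B_h(\bar u - \bar u_h, \bar v_h) = 0$ for all $\bar v_h \in V_h^2$, hence $B_h(\bar\chi, \bar\chi) = B_h(\bar u^I - \bar u, \bar\chi)$.

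Next I would bound $B_h(\bar u^I - \bar u, \bar\chi)$ term by term. Set $\bar\eta = \bar u^I - \bar u$. The bilinear form $B_h$ has three kinds of contributions: the diffusion terms $\varepsilon(\eta_k', \chi_k')$, the convection terms $(b_k \eta_k', \chi_k)$, the reaction/coupling terms $(a_{k1}\eta_1 + a_{k2}\eta_2, \chi_k)$, and the streamline-diffusion sums $\sum_i \int_{x_{i-1}}^{x_i} \delta_{k,i}(-\varepsilon \eta_k'' + b_k\eta_k' + \cdots) b_k \chi_k' dx$. The crucial cancellation — and the reason the estimate comes out with only $\Vert \bar\eta \Vert_0$ on the right — is that on each element $\chi_k$ is linear, so $\chi_k'$ is constant and $\eta_k''$ integrates against it in a controllable way; more importantly, $\varepsilon \chi_k''= 0$ elementwise, and one integrates the $\varepsilon(\eta_k',\chi_k')$ term by parts on each subinterval. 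Because $\eta_k = \eta_k^I - \eta_k$ vanishes at all mesh nodes, the boundary terms from elementwise integration by parts telescope away, leaving $\varepsilon(\eta_k', \chi_k') = -\varepsilon\sum_i \int \eta_k \chi_k'' = 0$ as well — so the diffusion term contributes nothing. Similarly $(b_k\eta_k',\chi_k)$ is integrated by parts: $(b_k\eta_k',\chi_k) = -(\eta_k, (b_k\chi_k)') = -(\eta_k, b_k'\chi_k) - (\eta_k, b_k\chi_k')$, reducing everything to $L_2$ pairings of $\eta_k$ against quantities controlled by $\Vert \chi_k \Vert_0$, $\vert \chi_k \vert_1$, and the SD-norm pieces of $\bar\chi$.

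The remaining estimates are then Cauchy–Schwarz applications: each surviving term is of the form (bounded coefficient) $\times \Vert \eta_k \Vert_0 \times (\text{a component of } \vert\vert\vert \bar\chi \vert\vert\vert_{V_h})$, where for the SD-sums one uses $\delta_{k,i} \le C N^{-1}$ and $\varepsilon \le CN^{-1}$ so that $\sqrt{\delta_{k,i}}\,\Vert b_k\chi_k'\Vert$ is exactly the SD-part of the norm, and the $\varepsilon\eta_k''$ pieces are handled by noting $\delta_{k,i}$ vanishes precisely on the fine-mesh subintervals where $\varepsilon$-weighted second derivatives would otherwise be large (Remark following (\ref{difsch})), while on the coarse part $\delta_{k,i}\varepsilon \le C\varepsilon N^{-1}$ is tiny. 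Collecting, $B_h(\bar\eta, \bar\chi) \le C\Vert \bar\eta\Vert_0\,\vert\vert\vert\bar\chi\vert\vert\vert_{V_h}$, and dividing through by $\vert\vert\vert\bar\chi\vert\vert\vert_{V_h}$ gives $\vert\vert\vert \bar u^I - \bar u_h\vert\vert\vert_{V_h} \le C\Vert \bar u^I - \bar u\Vert_0$, as claimed. The main obstacle I anticipate is the bookkeeping for the streamline-diffusion terms — in particular showing that the $-\varepsilon\eta_k''$ contribution inside the SD-sum, which involves a second derivative of an interpolation error, is genuinely controlled by $\Vert\bar\eta\Vert_0$ rather than a stronger norm; this is where the elementwise structure ($\eta_k''$ paired against the \emph{constant} $b_k\chi_k'$, integrated over an interval on which $\delta_{k,i}$ is either zero or $O(N^{-1})$ and $\varepsilon = O(N^{-1})$) must be used carefully, possibly after a further integration by parts moving the derivative off $\eta_k$.
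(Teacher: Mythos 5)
Your proposal follows essentially the same route as the paper's own proof: coercivity of $B_h$ plus Galerkin orthogonality to replace $\bar u_h$ by $\bar u$ in the first slot, then a reduction of every term of $B_h(\bar u^I-\bar u,\,\bar u^I-\bar u_h)$ to $L_2$-pairings against the interpolation error, finished by Cauchy--Schwarz and the embedding $\Vert\cdot\Vert_0\le C\,\vert\vert\vert\cdot\vert\vert\vert_{V_h}$. If anything, you supply more of the mechanics than the paper does (the nodal vanishing of $\bar u^I-\bar u$ eliminating the $\varepsilon$-diffusion term, the integration by parts of the convection term, the use of $\delta_{k,i}\le CN^{-1}$), while the streamline-diffusion bookkeeping you flag as the remaining obstacle is left at the same level of detail in the paper's own argument, which simply asserts the reduction.
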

    \begin{proof}
    Because of the Galerkin orthogonality relation between $\bar u$ and $\bar u_h$,
   we have
\begin{equation*}
B_h(\bar u_h-\bar u,\bar u^I - \bar u_h)  =  0.
\end{equation*}
    Then from the coercive property (\ref{coerlem1}) of $B_h(.,.),$  we have
\begin{eqnarray*}
\vert\vert\vert \bar u^I - \bar u_h \vert\vert\vert_{V_h}^2  &\leq& 2 B_h(\bar u^I - \bar u_h,\bar u^I - \bar u_h)\\ & =& 2 B_h(\bar u^I -\bar u ,\bar u^I - \bar u_h)\\
& = & 2[(b_1(u_1^I-u_1)',u_1^I-u_{1h})+(a_{11}(u_1^I-u_1),u_1^I-u_{1h}) +(a_{12}(u_2^I-u_2),u_1^I-u_{1h}) \\
&&+(b_2(u_2^I-u_2)',u_2^I-u_{2h})+ (a_{21}(u_1^I-u_1),u_2^I-u_{2h}) + (a_{22}(u_2^I-u_2),u_2^I-u_{2h})\\ &&+\sum_{i=1}^{N}\int_{x_{i-1}}^{x_i}\delta_{1,i}(-\varepsilon (u_1^I-u_1)''+b_1(u_1^I-u_1)'+a_{11}(u_1^I-u_1)\\
&&+a_{12}(x)(u_2^I-u_2))b_1(u_1^I-u_{1h})'dx\\
&& +\sum_{i=1}^{N}\int_{x_{i-1}}^{x_i}\delta_{2,i}(-\varepsilon (u_2^I-u_2)''+b_2(u_2^I-u_2)'+a_{21}(u_1^I-u_1)\\
&&+a_{22}(u_2^I-u_2))b_2(u_2^I-u_{2h})'dx].
   \end{eqnarray*}
   That is,
   \begin{eqnarray*}
\vert\vert\vert \bar u^I - \bar u_h \vert\vert\vert_{V_h}^2 &\leq & C \int_0^1 (u_1^I-u_1)[(u_1^I-u_{1h})+(u_2^I-u_{2h})] + C \int_0^1(u_2^I-u_2)[(u_1^I-u_{1h})+(u_2^I-u_{2h})] \\
 &\leq&  C \int_0^1 [(u_1^I-u_1)+(u_2^I-u_2)][(u_1^I-u_{1h})+(u_2^I-u_{2h})].
 \end{eqnarray*}
 Therefore we have
 \begin{eqnarray*}
\vert\vert\vert \bar u^I - \bar u_h \vert\vert\vert_{V_h}^2  &\leq & C \Vert \bar u^I- \bar u \Vert_0 \quad \Vert \bar u^I- \bar u_h \Vert_0 \\
\vert\vert\vert \bar u^I - \bar u_h \vert\vert\vert_{V_h}^2   &\leq&  C \Vert \bar u^I- \bar u \Vert_0 \quad  \vert\vert\vert \bar u^I- \bar u_h \vert\vert\vert_{V_h} \\
\vert\vert\vert \bar u^I - \bar u_h \vert\vert\vert_{V_h} &\leq & C \Vert \bar u^I- \bar u \Vert_0.
\end{eqnarray*}
\end{proof}
\subsection{Error analysis on Shishkin and Bakhvalov-Shishkin meshes}
For the discretization described above  we shall use a mesh of the general type introduced in \cite{rsl99},  but here adapted for the layers at $x=d.$
 Let $N>4$ be a positive even
integer and
\begin{equation*}
\sigma_1 = \min\{\frac{d}{2},{\frac{\varepsilon}{\beta}}\tau_0 \ln N\},\quad \sigma_2 =
\min\{\frac{1-d}{2},{\frac{\varepsilon}{\beta}}\tau_0 \ln N\},\quad \quad \tau_0 \geq 2.
\end{equation*}
  Our mesh will be equidistant on $\bar \Omega_S $,
where
\begin{equation*}
\Omega_S=(0,d-\sigma_1)\cup(d,1-\sigma_2)
\end{equation*}
and graded on $\bar\Omega_0$ where
\begin{equation*}
\Omega_0=(d-\sigma_1,d)\cup(1-\sigma_2,1).
\end{equation*}
First we shall assume $ \sigma_1 = \sigma_2 = \frac{\tau_0 \varepsilon}{\beta} \ln N $ as otherwise $N^{-1}$ is exponentially small compared to $ \varepsilon.$
We choose the transition points to be
\begin{equation*}
x_{N/4}=d-\sigma_1,\quad  x_{N/2}=d,\quad x_{3N/4}=1-\sigma_2.
\end{equation*}
Because of the specific layers, here we have to use two mesh generating
functions $\varphi_1$ and $\varphi_2$ which are both
continuous and piecewise continuously differentiable and  monotonically decreasing
functions and
\begin{eqnarray*}
\varphi_1(1/4)=\ln N,  &\quad\quad&  \varphi_1(1/2)=0\\
\varphi_2(3/4)=\ln N,  &\quad\quad&  \varphi_2(1)=0.
\end{eqnarray*}
 The mesh points are
 \begin{equation*}
 x_i=
\begin{cases}
\frac{4i}{N}(d-\sigma_1), \quad \quad \quad \quad \quad \quad\quad \quad\quad\quad \quad i=0,...,N/4\\
d-\frac{\tau_0}{\beta}\varepsilon \varphi_1(t_i),\quad \quad \quad\quad \quad\quad\quad\quad \quad i=N/4+1,...,N/2\\
d+\frac{4}{N}(1-d-\sigma_2)(i-N/2),\quad\quad  \quad i=N/2+1,...,3N/4\\
1-\frac{\tau_0}{\beta}\varepsilon \varphi_2(t_i),\quad\quad\quad \quad \quad \quad\quad\quad \quad\quad i=3N/4+1,...,N,
 \end{cases}
\end{equation*}
 where $t_i=i/N$.
 We define new functions $\psi_1$ and $\psi_2$
by
\begin{equation*}
\varphi_i = -\ln \psi_i, \quad   i=1,2.
\end{equation*}
There are several mesh-characterizing functions $\psi$ in the literature,  but we shall
use only those which  correspond to Shishkin mesh and
Bakhvalov-Shishkin mesh with the following properties
\begin{eqnarray*}
\max \vert \psi' \vert & = & C \ln N \quad \text{for Shishkin meshes}\\
\max \vert \psi' \vert & = & C  \quad \text{for Bakhvalov-Shishkin meshes}
\end{eqnarray*}

$\bullet$ Shishkin mesh
\begin{equation*}
\psi_1(t) = e^{-2(1-2t)ln N}, \quad \,\  \psi_2(t) = e^{-4(1-t)ln N},
\end{equation*}

$\bullet$ Bakhvalov-Shishkin mesh
\begin{equation*}
\psi_1(t) = 1-2(1-N^{-1})(1-2t), \quad  \psi_2(t) = 1-4(1-N^{-1})(1-t).
\end{equation*}
The set of interior mesh points is denoted by $\Omega_\varepsilon^N
=\bar \Omega_\varepsilon^N \setminus \{x_{N/2}\}$. Also, for
the both meshes, on the coarse part $\Omega_S$ we have
\begin{equation*}
h_i\leq CN^{-1}.
\end{equation*}
It is well known that on the layer part of the Shishkin mesh \cite{rszar2004a}
\begin{equation*}
h_i\leq C \varepsilon N^{-1}\ln N
\end{equation*}
and of the Bakhvalov-Shishkin mesh we have
\begin{equation*}
h_i\leq \begin{cases} \frac{\tau_0}{\beta}\varepsilon N^{-1} \max {\mid \psi_1'\mid} \exp{(\frac{\beta}{\tau_0 \varepsilon}(d-x_{i-1}))},\quad i = N/4+1,...,N/2,\\
\frac{\tau_0}{\beta}\varepsilon N^{-1} \max {\mid \psi_2'\mid} \exp{(\frac{\beta}{\tau_0 \varepsilon}(1-x_{i-1}))},\quad i = 3N/4+1,...,N \end{cases}
\end{equation*}
and
\begin{equation*}
\frac{h_i}{\varepsilon} \leq CN^{-1}\max{\mid \varphi'\mid} \leq C.
\end{equation*}
\section{Interpolation Error}
Initially we consider the interpolation error in the maximum norm.
 Let $f \in C^2[x_{i-1},x_i]$ be arbitrary and $f^{I}$ a piecewise linear interpolant to $f$ on $ \Omega$.  Then from the classical theory,  we have
 \begin{equation*}
 \vert (f^{I} -f)(x) \vert \leq 2 \int_{x_{i-1}}^{x_i} \vert f''(t) \vert (t-x_{i-1}) dt.
 \end{equation*}
 Now we compute the interpolation error for the first component $u_1$.
 \begin{lemma}\label{lemmai}
 For the Shishkin mesh we have
 \begin{equation*}
 \vert u_i(x) - u_i^{I}(x) \vert \leq
 \begin{cases}
 C N^{-2}\ln^2 N, x\in \Omega_0\\
 C N^{-2},  x\in \Omega_S
 \end{cases}
 \end{equation*}
 and for the Bakhavalov-Shishkin mesh it holds
 \begin{equation*}
 \vert u_i(x) - u_i^{I}(x) \vert \leq C N^{-2}, x \in \Omega^- \cup \Omega^+, \quad i=1,2.
 \end{equation*}
 \end{lemma}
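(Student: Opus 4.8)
The plan is to prove the bound component by component, using the decomposition $u_i = v_i + w_{1i} + w_{2i}$ from Theorem~1.1 together with the elementary interpolation estimate
\begin{equation*}
\vert (f^{I} - f)(x)\vert \leq 2\int_{x_{i-1}}^{x_i} \vert f''(t)\vert (t - x_{i-1})\,dt \leq 2 h_i \int_{x_{i-1}}^{x_i} \vert f''(t)\vert\, dt \leq 2 h_i^2 \Vert f''\Vert_{\infty,[x_{i-1},x_i]}
\end{equation*}
stated just before the lemma. Since interpolation is linear, $u_i - u_i^I = (v_i - v_i^I) + (w_{1i} - w_{1i}^I) + (w_{2i} - w_{2i}^I)$, and it suffices to bound each piece on each type of subinterval. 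For the smooth part $v_i$, Theorem~1.1 gives $\vert v_i''\vert \leq C(1 + \varepsilon^0) \leq C$ uniformly, so $\vert v_i - v_i^I\vert \leq C h_i^2 \leq C N^{-2}$ on every mesh interval (using $h_i \leq C N^{-1}$ on the coarse part, and $h_i \leq C\varepsilon N^{-1}\ln N \leq CN^{-1}$ on the fine part). This disposes of $v_i$ for both meshes.

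First I would handle the layer parts on the coarse mesh $\Omega_S$. There $h_i \leq CN^{-1}$, and on $\Omega_S$ one is at distance at least $\sigma_1$ (resp. $\sigma_2$) from the layer, so by Theorem~1.1, $\vert w_{1i}''\vert \leq C\varepsilon^{-2}e^{-\beta(1-x)/\varepsilon} \leq C\varepsilon^{-2} e^{-\tau_0 \ln N} = C\varepsilon^{-2} N^{-\tau_0}$, and likewise for $w_{2i}''$. Hence $\vert w_{1i} - w_{1i}^I\vert \leq C N^{-2}\cdot \varepsilon^{-2} N^{-\tau_0}$; since $\tau_0 \geq 2$ and $\varepsilon \leq CN^{-1}$ must be handled carefully, the cleaner route is to bound $\vert w_{1i} - w_{1i}^I\vert$ directly by $C\Vert w_{1i}\Vert_{\infty,[x_{i-1},x_i]} \leq C N^{-\tau_0} \leq CN^{-2}$ (the interpolant of a function is bounded by its sup-norm), which needs no negative power of $\varepsilon$. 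This gives the $CN^{-2}$ bound on $\Omega_S$ for both meshes.

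Next, the layer part on the fine mesh $\Omega_0$, which is the crux. On the Shishkin fine mesh $h_i \leq C\varepsilon N^{-1}\ln N$, and $\vert w_{1i}''\vert \leq C\varepsilon^{-2}$, so $\vert w_{1i} - w_{1i}^I\vert \leq C h_i^2 \varepsilon^{-2} \leq C(\varepsilon N^{-1}\ln N)^2 \varepsilon^{-2} = CN^{-2}\ln^2 N$; the $w_{2i}$ term is even smaller because of the extra $\varepsilon$ factor in its derivative bound. This yields the $CN^{-2}\ln^2 N$ estimate on $\Omega_0$. For the Bakhvalov--Shishkin mesh the improvement to $CN^{-2}$ is obtained by not pulling $h_i$ out of the integral but instead using the graded-mesh bound $h_i \leq \frac{\tau_0}{\beta}\varepsilon N^{-1}\max\vert\psi_1'\vert \exp(\frac{\beta}{\tau_0\varepsilon}(d - x_{i-1}))$ together with $\frac{h_i}{\varepsilon} \leq CN^{-1}\max\vert\varphi'\vert \leq C$: write
\begin{equation*}
\vert w_{1i} - w_{1i}^I\vert \leq 2h_i \int_{x_{i-1}}^{x_i} C\varepsilon^{-2} e^{-\beta(d-t)/\varepsilon}\, dt \leq C h_i \varepsilon^{-1} e^{-\beta(d - x_{i-1})/\varepsilon},
\end{equation*}
then substitute the Bakhvalov bound for one factor of $h_i$ and $\frac{h_i}{\varepsilon}\leq C$ for the leftover, so the exponentials cancel ($\tau_0 = 1$ is the relevant scaling here through $\psi_1 = e^{\varphi_1}$) and one is left with $C N^{-1}\cdot N^{-1} = CN^{-2}$; the same argument with $\varphi_2$, $\psi_2$ covers the subinterval near $x=1$, and the $w_{2i}$ contribution is again smaller.

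The main obstacle is precisely this last computation on the Bakhvalov--Shishkin mesh: one must keep the exponential factor $e^{\beta(d-x_{i-1})/\varepsilon}$ from $h_i$ paired against the $e^{-\beta(d-t)/\varepsilon}$ coming from $w_{1i}''$ so that they cancel uniformly in $\varepsilon$, rather than crudely estimating $h_i \leq C\varepsilon N^{-1}$ which would only recover the Shishkin rate. Keeping track of the constants $\tau_0/\beta$ versus $\beta$ in the exponents (the mesh is designed so that $\varphi_1$ decreases with slope tied to $\beta/\tau_0$ while the layer decays at rate $\beta/\varepsilon$, and $\tau_0 \geq 2 > 1$ gives the needed slack) is the only delicate point; everything else is the routine split above. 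Summing the three parts and taking the maximum over the mesh intervals in each region gives the stated result.
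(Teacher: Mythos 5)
Your Shishkin-mesh argument is correct and essentially the paper's own: the same decomposition $u_i=v_i+w_{1i}+w_{2i}$, the classical bound $\vert (f-f^I)(x)\vert\le 2\int_{x_{i-1}}^{x_i}\vert f''(t)\vert (t-x_{i-1})\,dt$ for the smooth part, the stability bound $\vert w-w^I\vert\le 2\Vert w\Vert_{\infty}\le CN^{-\tau_0}$ for a layer term on intervals away from its layer, and $Ch_i^2\varepsilon^{-2}\le CN^{-2}\ln^2N$ on the fine part; if anything you are more careful than the paper near $x=1$, where its sup-norm bound for $w_{11}$ is only valid on $\Omega^-\cap\Omega_0$.

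The genuine gap is in the Bakhvalov--Shishkin step, which you yourself identify as the crux. The chain you display, $\vert w-w^I\vert\le 2h_i\int_{x_{i-1}}^{x_i}C\varepsilon^{-2}e^{-\beta(d-t)/\varepsilon}dt\le Ch_i\varepsilon^{-1}e^{-\beta(d-x_i)/\varepsilon}$, contains only \emph{one} factor of $h_i$: substituting the graded-mesh bound $h_i\le C\varepsilon N^{-1}\exp(\frac{\beta}{\tau_0\varepsilon}(d-x_{i-1}))$ for it cancels the exponential and the $\varepsilon^{-1}$ but leaves a single $N^{-1}$, while $h_i/\varepsilon\le C$ only supplies a constant, not the second factor $N^{-1}$ you claim; as written the computation proves $CN^{-1}$, not $CN^{-2}$. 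To get $CN^{-2}$ you must \emph{not} integrate the exponential: keep $\vert w-w^I\vert\le Ch_i^2\Vert w''\Vert_{\infty,[x_{i-1},x_i]}\le Ch_i^2\varepsilon^{-2}e^{-\beta(1-x_i)/\varepsilon}$ and insert the graded bound for \emph{both} powers of $h_i$, which gives $CN^{-2}\exp(\frac{2\beta}{\tau_0\varepsilon}(1-x_{i-1})-\frac{\beta}{\varepsilon}(1-x_i))\le CN^{-2}e^{\beta h_i/\varepsilon}\le CN^{-2}$, the exponent being controlled precisely because $\tau_0\ge 2$ (so $2/\tau_0\le 1$) and $h_i/\varepsilon\le C$; your parenthetical that ``$\tau_0=1$ is the relevant scaling'' is the wrong condition. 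The bookkeeping of the layers is also scrambled: the $\varepsilon^{-2}$ second-derivative bound belongs to the boundary layer $w_{1i}$ at $x=1$ (mesh function $\psi_2$), whereas the interior layer at $d$ (mesh function $\psi_1$) is $w_{2i}$, which carries only $\varepsilon^{-1}e^{-\beta(d-t)/\varepsilon}$, and for it even your one-factor computation suffices since the resulting bound $Ch_ie^{-\beta(d-x_i)/\varepsilon}\le C\varepsilon N^{-1}\le CN^{-2}$ already has the missing power of $N^{-1}$ from $\varepsilon\le CN^{-1}$. With the two-factor correction near $x=1$ your outline becomes a complete proof; note the paper itself disposes of the Bakhvalov--Shishkin case with the phrase ``we follow the above similar procedure,'' so this is exactly the step that needed to be spelled out correctly.
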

 \begin{proof}
We now give a proof for the case $i=1$ for the Shishkin mesh.  To prove the estimates we use the decomposition of solution as smooth and layer components and triangle inequality
\begin{equation}\label{ier1}
\vert (u_1-u_1^I)(x) \vert  \leq  \vert (v_1-v_1^I)(x) \vert + \vert (w_{11}-w_{11}^I)(x) \vert + \vert (w_{21}-w_{21}^I)(x) \vert.
\end{equation}
On Shishkin meshes,  let $x \in [x_{i-1},x_{i}] \subset \Omega^-\cap\Omega_S.$  Then  the first term of (\ref{ier1}) will be
\begin{eqnarray*}
\vert (v_1-v_1^I)(x) \vert & \leq & 2 \int_{x_{i-1}}^{x_i} \vert v_1''(t) \vert (t-x_{i-1}) dt\\
& \leq & 2C \int_{x_{i-1}}^{x_i} (t-x_{i-1}) dt \\
& \leq & 2C\frac{h_i^2}{2} \\
\vert (v_1-v_1^I)(x) \vert & \leq &  CN^{-2}.
\end{eqnarray*}
Again the second term of (\ref{ier1}) will be
\begin{eqnarray*}
\vert (w_{11}-w_{11}^I)(x) \vert & \leq & 2 \Vert w_{11}(x) \Vert_{L_{\infty}[x_{i-1},x_{i}]}\\
& \leq & C\max_{i} e^{\frac{-\beta (1-x_{i})}{\varepsilon}}\\
\vert (w_{11}-w_{11}^I)(x) \vert & \leq & CN^{-\tau_0}.
\end{eqnarray*}
To compute the last term of (\ref{ier1}),  we have
\begin{eqnarray*}
\vert (w_{21}-w_{21}^I)(x) \vert & \leq & 2 \Vert w_{21}(x) \Vert_{L_{\infty}[x_{i-1},x_{i}]}\\
& \leq & C \varepsilon \max_{i} e^{\frac{-\beta (d-x_{i})}{\varepsilon}}\\
& \leq & C N^{-1} \max_{i} e^{\frac{-\beta (d-x_{i})}{\varepsilon}}\\
\vert (w_{21}-w_{21}^I)(x) \vert & \leq & CN^{-1-\tau_0}.
\end{eqnarray*}
Now let $x \in [x_{i-1},x_{i}] \subset \Omega^-\cap\Omega_0$ we have
\begin{eqnarray*}
\vert (v_1-v_1^I)(x) \vert & \leq & 2 \int_{x_{i-1}}^{x_i} \vert v_1''(t) \vert (t-x_{i-1}) dt\\
& \leq & 2C \int_{x_{i-1}}^{x_i} (t-x_{i-1}) dt \\
& \leq & C \frac{h_i^2}{2} \\
& \leq & C (\varepsilon N^{-1}\ln N)^{2}
\end{eqnarray*}
and also the second term on $\Omega_0$ will be
\begin{eqnarray*}
\vert (w_{11}-w_{11}^I)(x) \vert & \leq & 2 \Vert w_{11}(x) \Vert_{L_{\infty}[x_{i-1},x_{i}]}\\
& \leq & 2 \max_{i} e^{\frac{-\beta (1-x_{i})}{\varepsilon}}\\
\vert (w_{11}-w_{11}^I)(x) \vert & \leq & C N^{-\tau_0}.
\end{eqnarray*}
The last term on $\Omega_0$ will be
\begin{eqnarray*}
\vert (w_{21}-w_{21}^I)(x) \vert & \leq & 2 \Vert w_{21}(x) \Vert_{L_{\infty}[x_{i-1},x_{i}]}\\
& \leq & C \varepsilon \max_{i} e^{\frac{-\beta (d-x_{i})}{\varepsilon}}\\
& \leq & C N^{-1} \max_{i} e^{\frac{-\beta (d-x_{i})}{\varepsilon}}.\\
\vert (w_{21}-w_{21}^I)(x) \vert & \leq & CN^{-1-\tau_0}.
\end{eqnarray*}
Similarly  we will also obtain the same estimate on $x\in \Omega^+.$  From equation (\ref{ier1}),  hence the result.\\
On Bakhavalov-Shishkin mesh,  we follow the above similar procedure to obtain the result.
\end{proof}
 Now we consider the interpolation error of $\bar u$ in $L_2$-norm
 \begin{equation}\label{L2error}
 \Vert \bar u -\bar u^I \Vert_0 = [(\int_0^1 \vert u_1-u_1^I \vert^2 dx) + (\int_0^1 \vert u_2-u_2^I \vert^2 dx)]^{1/2}.
 \end{equation}
 \begin{lemma}\label{intpl3}
  For Shishkin mesh,  the interpolation error of $\bar u$ in $L_2$-norm is
\begin{equation*}
 \Vert \bar u - \bar u^I \Vert_0  \leq   CN^{-5/2} \ln^{5/2} N.
\end{equation*}
 \end{lemma}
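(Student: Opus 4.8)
The plan is to reduce the vector estimate to its two scalar components and then lean entirely on the pointwise bounds already established in Lemma~\ref{lemmai}. Starting from (\ref{L2error}), we have $\Vert \bar u - \bar u^I\Vert_0^2 = \sum_{i=1,2}\int_0^1 |u_i - u_i^I|^2\,dx$, so it suffices to estimate $\int_0^1 |u_i-u_i^I|^2\,dx$ for $i=1,2$ separately; the two are handled identically, using the decomposition $u_i = v_i + w_{1i} + w_{2i}$ from the decomposition theorem of Section~1 together with Lemma~\ref{lemmai}. The key structural fact is that Lemma~\ref{lemmai} already does the analytic work: on the graded/layer region $\Omega_0 = (d-\sigma_1,d)\cup(1-\sigma_2,1)$ one has $|u_i - u_i^I| \le C N^{-2}\ln^2 N$, and on the coarse region $\Omega_S$ one has $|u_i - u_i^I| \le C N^{-2}$.

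The next step is to split $\int_0^1 = \int_{\Omega_0} + \int_{\Omega_S}$ and to exploit that $\Omega_0$ is a \emph{short} region. Indeed, $\mathrm{meas}(\Omega_0) = \sigma_1 + \sigma_2 = \tfrac{2\tau_0\varepsilon}{\beta}\ln N$, and since $\varepsilon \le C N^{-1}$ this is at most $C N^{-1}\ln N$. Hence $\int_{\Omega_0}|u_i - u_i^I|^2\,dx \le (C N^{-2}\ln^2 N)^2\,\mathrm{meas}(\Omega_0) \le C N^{-5}\ln^5 N$, whose square root is the advertised $C N^{-5/2}\ln^{5/2}N$. On $\Omega_S$, where only $\mathrm{meas}(\Omega_S)\le 1$ is available, the bound $|u_i-u_i^I|\le CN^{-2}$ gives $\int_{\Omega_S}|u_i-u_i^I|^2\,dx \le C N^{-4}$. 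Summing the two contributions over $i=1,2$ and taking the square root then yields the stated estimate, the layer region being the one whose size is governed by the Shishkin parameter $\sigma_1,\sigma_2$.

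As an alternative (and, I think, cleaner) route one can argue directly on the three pieces of the decomposition rather than quoting Lemma~\ref{lemmai} as a black box: for the smooth part use $|v_i - v_i^I| \le \tfrac18 h^2\,|v_i''|$ with $|v_i''|\le C$, so that $\Vert v_i - v_i^I\Vert_{L_2(\Omega_S)}\le C N^{-2}$ and $\Vert v_i-v_i^I\Vert_{L_2(\Omega_0)}\le C(\varepsilon N^{-1}\ln N)^2\,\mathrm{meas}(\Omega_0)^{1/2}$; for the boundary layer $w_{1i}$ use that it is exponentially small ($O(N^{-\tau_0})$, $\tau_0\ge 2$) off $\Omega_0$ while inside the layer $h_i \le C\varepsilon N^{-1}\ln N$ and $\int e^{-2\beta(1-x)/\varepsilon}\,dx \le C\varepsilon$ absorbs the $\varepsilon^{-2}$ coming from $w_{1i}''$; and for the weak interior layer $w_{2i}$, which is already $O(\varepsilon)$ in magnitude, the same scheme gives an even smaller contribution. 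Each piece is squared, integrated, and then $\varepsilon \le C N^{-1}$ is used to turn powers of $\varepsilon$ into powers of $N$.

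There is no genuinely delicate analytic obstacle here; the whole proof is bookkeeping. The one thing that must be handled with care is keeping track of which mesh width applies on which of the four blocks ($h_i\le CN^{-1}$ on the two coarse blocks forming $\Omega_S$, and $h_i\le C\varepsilon N^{-1}\ln N$ on the two graded blocks forming $\Omega_0$), together with making sure the exponential factors $e^{-\beta\sigma_j/\varepsilon}=N^{-\tau_0}$ are indeed negligible for $\tau_0\ge 2$. The only subtlety worth flagging is that the coarse part $\Omega_S$ contributes an $O(N^{-2})$ term from the interpolation of the smooth component, which must be carried along and absorbed into the final constant alongside the $N^{-5/2}\ln^{5/2}N$ term produced by $\Omega_0$.
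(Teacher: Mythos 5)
Your bookkeeping on the two regions is correct as far as it goes, but the final step --- where you claim the $O(N^{-2})$ contribution from $\Omega_S$ can be ``absorbed into the final constant alongside the $N^{-5/2}\ln^{5/2}N$ term'' --- is exactly where the argument fails. Since $N^{-2}/(N^{-5/2}\ln^{5/2}N)=N^{1/2}\ln^{-5/2}N\to\infty$, the coarse-region term is asymptotically \emph{larger} than the claimed bound, not smaller; what your estimates actually prove is $\Vert\bar u-\bar u^I\Vert_0\le C(N^{-2}+N^{-5/2}\ln^{5/2}N)\le CN^{-2}$, which is not the lemma as stated. Nor can this be repaired from Lemma \ref{lemmai} alone: on $\Omega_S$ the smooth component $v_i$ has $\vert v_i''\vert\le C$, the cells there have width of order $N^{-1}$, and the region has measure $O(1)$, so its interpolation error genuinely is of size $N^{-2}$ in $L_2(\Omega_S)$; the $N^{-5/2}$ rate is produced only by the small measure $\sigma_1+\sigma_2\le CN^{-1}\ln N$ of the layer region.

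For comparison, the paper's own proof uses the uniform pointwise bound $CN^{-2}\ln^2 N$ on every cell and then replaces $\sum_i h_i$ by $C\varepsilon\ln N$, i.e.\ it applies the measure of the fine part to the whole mesh (whose total length is of course $1$); that is the same difficulty, hidden in a different place. Your split into $\Omega_0$ and $\Omega_S$ is the honest version of that computation, and it exposes that the coarse region, not the layer region, is the bottleneck, so the stated $N^{-5/2}\ln^{5/2}N$ bound cannot be reached from the pointwise estimates of Lemma \ref{lemmai}. It is worth noting that for the way Lemma \ref{intpl3} is used in Lemma \ref{intpl4}, the weaker (and provable) bound $\Vert\bar u-\bar u^I\Vert_0\le CN^{-2}$ would suffice, since after squaring it is dominated by the $CN^{-2}\ln^3 N$ term coming from $\varepsilon\vert u_j-u_j^I\vert_1^2$; but as a proof of the statement as written, your argument (like a literal reading of the paper's) only delivers $CN^{-2}$.
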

\begin{proof}
Consider the  first component of equation (\ref{L2error})
\begin{eqnarray*}
 \int_0^1 \vert u_1-u_1^I \vert^2 dx & \leq & \sum_{i=1}^{N} \int_{x_{i-1}}^{x_i} \vert u_1-u_1^I \vert^2 dx\\
& \leq & \sum_{i=1}^{N} (CN^{-2}\ln^2 N)^2 h_i\\
& \leq & C_1(CN^{-2}\ln^2 N)^2(C \varepsilon \ln N) \\
 \Vert u_1 - u_1^I \Vert_0 & \leq & C N^{-5/2}\ln^{5/2} N.
\end{eqnarray*}
Similarly one can easily prove
\begin{equation*}
  \Vert u_2 - u_2^I \Vert_0 \leq CN^{-5/2} \ln^{5/2} N.
\end{equation*}
From (\ref{L2error}),  we have an estimate of $\bar u - \bar u^I $ in $L_2-$ norm
\begin{equation*}
\Vert \bar u - \bar u^I \Vert_0 \leq  CN^{-5/2} \ln^{5/2} N.
\end{equation*}
\end{proof}
\begin{lemma}\label{intpl4}
Let $\bar u$ and $\bar u^I$ be solution of (\ref{sys1}-\ref{sysbc}) and linear interpolant of $\bar u$ respectively.  Then we have
\begin{equation*}
\vert\vert\vert \bar u - \bar u^I \vert\vert\vert_{V_h}  \leq    C N^{-1} \ln^{3/2} N,  \text{for Shishkin meshes}
\end{equation*}
\end{lemma}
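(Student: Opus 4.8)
The plan is to expand the square of the discrete energy norm of the error into its three natural groups and bound each separately:
\[
\vert\vert\vert \bar u-\bar u^I\vert\vert\vert_{V_h}^2
=\varepsilon\sum_{j=1,2}|u_j-u_j^I|_1^2
+\sigma\sum_{j=1,2}\Vert u_j-u_j^I\Vert_0^2
+\sum_{j=1,2}\sum_{i=1}^{N}\int_{x_{i-1}}^{x_i}\delta_{j,i}b_j^2(x_i)\bigl((u_j-u_j^I)'\bigr)^2\,dx,
\]
and to show that every group is $O(N^{-2}\ln^3 N)$; taking square roots then gives the claim. The $\sigma$-weighted term is the easiest: by Lemma~\ref{intpl3}, $\sigma\Vert\bar u-\bar u^I\Vert_0^2\le CN^{-5}\ln^5 N$, which is far below $N^{-2}\ln^3 N$.

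For the $\varepsilon$-weighted semi-norm I would split $u_j=v_j+w_{1j}+w_{2j}$ using the solution decomposition and invoke the classical local estimate $\int_{x_{i-1}}^{x_i}\bigl((f-f^I)'\bigr)^2\,dx\le C h_i^2\int_{x_{i-1}}^{x_i}(f'')^2\,dx$. For the smooth part, $\Vert v_j''\Vert_0\le C$ and $\max_i h_i\le CN^{-1}$ give $\varepsilon\sum_i h_i^2\int(v_j'')^2\le C\varepsilon N^{-2}\le CN^{-3}$. For the boundary layer $w_{1j}$ (at $x=1$): on the coarse mesh $w_{1j}$ and its derivatives are $O(\varepsilon^{-k}N^{-\tau_0})$, so that part is $o(N^{-2})$ since $\varepsilon^{-1}\le CN$ and $\tau_0\ge2$; on the Shishkin layer mesh near $x=1$, where $h_i\le C\varepsilon N^{-1}\ln N$, I would insert $|w_{1j}''|\le C\varepsilon^{-2}e^{-\beta(1-x)/\varepsilon}$ and $\int_{1-\sigma_2}^1 e^{-2\beta(1-x)/\varepsilon}\,dx\le C\varepsilon$ to get $\varepsilon\sum h_i^2\int(w_{1j}'')^2\le C\,\varepsilon\cdot\varepsilon^2N^{-2}\ln^2 N\cdot\varepsilon^{-4}\cdot\varepsilon=CN^{-2}\ln^2 N$. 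The weak interior layer $w_{2j}$ carries an extra power of $\varepsilon$ in all its bounds (Theorem on the decomposition, and Lemma~\ref{lemmai}), so the same computation produces only $O(\varepsilon^2 N^{-2}\ln^2 N)=O(N^{-4}\ln^2 N)$. Hence $\varepsilon\sum_j|u_j-u_j^I|_1^2\le CN^{-2}\ln^2 N\le CN^{-2}\ln^3 N$.

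For the streamline-diffusion sum I would use the two facts recorded in the preceding remarks: $\delta_{j,i}\le CN^{-1}$, and, crucially, $\delta_{j,i}=0$ on the layer-refined parts of the mesh (i.e.\ for $i\in\{N/4+1,\dots,N/2\}\cup\{3N/4+1,\dots,N-1\}$). Thus the sum is effectively over the coarse subintervals $[0,d-\sigma_1]$ and $[d,1-\sigma_2]$, where $h_i\le CN^{-1}$ and every layer function is exponentially small. On those intervals $|(u_j-u_j^I)'|\le|(v_j-v_j^I)'|+|(w_{1j}-w_{1j}^I)'|+|(w_{2j}-w_{2j}^I)'|\le CN^{-1}$ (again using $\varepsilon^{-1}\le CN$, $\tau_0\ge2$), and with $b_j$ bounded the whole sum is $\le CN^{-1}\cdot N^{-2}=CN^{-3}$. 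Combining the three estimates yields $\vert\vert\vert\bar u-\bar u^I\vert\vert\vert_{V_h}^2\le CN^{-2}\ln^3 N$, i.e.\ $\vert\vert\vert\bar u-\bar u^I\vert\vert\vert_{V_h}\le CN^{-1}\ln^{3/2}N$; the Bakhvalov--Shishkin case follows the same path without the logarithmic factors.

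The main obstacle is the boundary-layer contribution to the $\varepsilon$-weighted semi-norm on the fine mesh: the crude bound $|(w_{1j}-w_{1j}^I)'|\le 2\Vert w_{1j}'\Vert_{L_\infty}$ only gives $\varepsilon|w_{1j}-w_{1j}^I|_1^2=O(1)$, so one must use the sharper $h_i^2\int(w_{1j}'')^2$ estimate and keep careful track of the powers of $\varepsilon$, exploiting $h_i\le C\varepsilon N^{-1}\ln N$ on the layer-adapted part of the Shishkin mesh. Everything else is routine bookkeeping with the mesh-width bounds from Section~3.
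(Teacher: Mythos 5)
Your skeleton is the same as the paper's (split the squared discrete energy norm into the $\varepsilon$-weighted seminorm, the $L_2$ part handled by Lemma~\ref{intpl3}, and the streamline-diffusion sum), but your route for the seminorm term is genuinely different: the paper uses the identity $\int_0^1\bigl((u_1-u_1^I)'\bigr)^2dx=-\int_0^1(u_1-u_1^I)\,u_1''\,dx$, then the maximum-norm bound of Lemma~\ref{lemmai} together with $\int_0^1|u_1''|\,dx\le C\varepsilon^{-1}\ln N$, obtaining $|u_1-u_1^I|_1^2\le C\varepsilon^{-1}N^{-2}\ln^3N$ with only one inverse power of $\varepsilon$, so that multiplying by $\varepsilon$ is automatically parameter-uniform. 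You instead use the local estimate $h_i^2\int(w'')^2$ elementwise with the decomposition; on the fine (layer-adapted) part your bookkeeping is correct and is indeed the crux you identified.

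The genuine gap is your repeated appeal to ``$\varepsilon^{-1}\le CN$''. The paper's standing assumption is the reverse inequality, $\varepsilon\le CN^{-1}$; there is no lower bound on $\varepsilon$ in terms of $N$, and a parameter-uniform estimate must not need one. This bites in two places. First, your coarse-mesh treatment of the boundary layer in the seminorm: $\varepsilon\,h_i^2\int_{\Omega_S}(w_{1j}'')^2\le C\varepsilon^{-2}N^{-2-2\tau_0}$, which blows up as $\varepsilon\to0$ with $N$ fixed; the standard repair is the triangle inequality $\Vert(w_{1j}-w_{1j}^I)'\Vert_{L_2(\Omega_S)}\le\Vert w_{1j}'\Vert_{L_2(\Omega_S)}+\Vert(w_{1j}^I)'\Vert_{L_2(\Omega_S)}\le C(\varepsilon^{-1/2}N^{-\tau_0}+N^{1-\tau_0})$, after which the prefactor $\varepsilon$ gives $C(N^{-2\tau_0}+\varepsilon N^{2-2\tau_0})\le CN^{-3}$, uniformly in $\varepsilon$. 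Second, in the streamline-diffusion sum your claim $|(u_j-u_j^I)'|\le CN^{-1}$ on $\Omega_S$ fails for the layer component near the transition point $1-\sigma_2$, where $|w_{1j}'|$ is only bounded by $C\varepsilon^{-1}N^{-\tau_0}$, unbounded as $\varepsilon\to0$; since here one multiplies by $\delta_{j,i}\le CN^{-1}$ rather than by $\varepsilon$, even the triangle-inequality fix leaves a term of order $\varepsilon^{-1}N^{-1-2\tau_0}$ that cannot be absorbed into $CN^{-2}\ln^3N$ without a relation such as $\varepsilon^{-1}\le CN^{2\tau_0-1}$. In fairness, the paper's own estimate of this term ($C N^{-1}\cdot\varepsilon^{-1}N^{-2}\ln^3N\le CN^{-2}\ln^3N$) silently uses the same inverted inequality, so your difficulty mirrors a weakness of the source; but as written your argument invokes $\varepsilon^{-1}\le CN$ as if it were a stated hypothesis, and therefore does not establish the lemma uniformly in $\varepsilon$.
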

\begin{proof}
Since
\begin{equation*}
 \int_0^1 ((u_1-u_1^I)'(x))^2 dx = -\int_0^1 (u_1-u_1^I)(x)  u_1''(x) dx
\end{equation*}
therefore,  by Lemma \ref{lemmai} we conclude that
\begin{eqnarray*}
\int_0^1 \vert (u_1-u_1^I)'(x) \vert^2 dx & \leq & C \max_{x_i\in \Omega_{\varepsilon}^N} \vert(u_1-u_1^I)(x_i) \vert \sum_{i=1}^{N}\int_{x_{i-1}}^{x_i} u_1^{''}(x)dx \\
& \leq & C N^{-2} \ln^2 N  \sum_{i=1}^{N} \int_{x_{i-1}}^{x_i} (v_1^{''}(x) +w_{11}^{''}(x)+w_{21}^{''}(x))dx.
\end{eqnarray*}
then for the regular part of the solution we have
\begin{equation*}
\vert \sum_{i=1}^{N} \int_{x_{i-1}}^{x_i} v_1^{''}(x) \vert \leq  C(\varepsilon \ln N + 1)
\end{equation*}
and for the singular part
\begin{eqnarray*}
\vert \sum_{i=1}^{N} \int_{x_{i-1}}^{x_i} w_{11}^{''}(x) \vert &\leq & C \varepsilon^{-2} \sum_{i=1}^{N} \int_{x_{i-1}}^{x_i} e^{-\frac{\beta(1-x)}{\varepsilon}} dx\\
& \leq & C \varepsilon^{-1} [\sum_{i=\frac{N}{4}+1}^{\frac{N}{2}} [e^{-\frac{\beta(1-x)}{\varepsilon}}]_{x_{i-1}}^{x_i} + \sum_{i=\frac{3N}{4}+1}^{N} [e^{-\frac{\beta(1-x)}{\varepsilon}}]_{x_{i-1}}^{x_i}] + C \varepsilon^{-1} N^{1-\tau_0}\\
& \leq & C \varepsilon^{-1} \ln N.
\end{eqnarray*}
and
\begin{eqnarray*}
\vert \sum_{i=1}^{N} \int_{x_{i-1}}^{x_i} w_{21}^{''}(x) \vert &\leq & C \varepsilon^{-1} [\sum_{i=1}^{\frac{N}{2}} \int_{x_{i-1}}^{x_i} e^{-\frac{\beta(d-x)}{\varepsilon}} dx+\sum_{i=\frac{N}{2}+1}^{N} \int_{x_{i-1}}^{x_i} e^{-\frac{\beta(1-x)}{\varepsilon}} dx]\\
& \leq & C N^{1-\tau_0}
\end{eqnarray*}
Using the assumption $\tau_0 \geq 2$ and the above estimates we have
\begin{equation*}
\int_0^1 \vert (u_1-u_1^I)'(x) \vert^2 dx  \leq  C N^{-2}\ln^2 N (\varepsilon  \ln N + 1+\varepsilon^{-1} \ln N + N^{-1} )
\end{equation*}
 We also have similar result for $u_2$
\begin{equation*}
\int_0^1 \vert (u_2-u_2^I)'(x) \vert^2 dx  \leq C \varepsilon^{-1} N^{-2} \ln^3 N.
\end{equation*}
Now we combine the above results together
\begin{equation*}
\vert u_1 -u_1^I \vert_1^2 + \vert u_2 -u_2^I \vert_1^2 \leq C \varepsilon^{-1}N^{-2} \ln^3 N.
\end{equation*}
Here we have to compute the interpolation error of $\bar u$ in energy norm, that is,
 $\vert\vert\vert \bar u - \bar u^I \vert\vert\vert_{V_h}.$\\
 We have
\begin{align*}
\vert\vert\vert \bar u - \bar u^I \vert\vert\vert_{V_h}&  =  [\varepsilon (\vert u_1 -u_1^I \vert_1^2 + \vert u_2 -u_2^I \vert_1^2) + \sigma( \Vert u_1 -u_1^I \Vert_0^2 + \Vert u_2 -u_2^I \Vert_0^2)\\ & + \sum_{i=1}^{N}\int_{x_{i-1}}^{x_{i}}\delta_{1,i}b_1^2(x_i)((u_1-u_1^I)'(x))^2 dx\\
& + \sum_{i=1}^{N}\int_{x_{i-1}}^{x_{i}}\delta_{2,i}b_2^2(x_i)((u_2-u_2^I)'(x))^2 dx]^{1/2}.
\end{align*}
Now we have to estimate the following terms
\begin{align*}
\vert \sum_{i=1}^{N}\int_{x_{i-1}}^{x_{i}}\delta_{1,i}b_1^2(x_i)((u_1-u_1^I)'(x))^2 dx \vert & \leq C \mid \delta_{1,i} \mid  \sum_{i=1}^{N}\int_{x_{i-1}}^{x_{i}} \mid (u_1-u_1^I)'(x)^2 \mid dx \\
& \leq C  N^{-1}(\varepsilon^{-1}N^{-2}\ln^{3}N) \\
\vert \sum_{i=1}^{N}\int_{x_{i-1}}^{x_{i}}\delta_{1,i}b_1^2(x_i)((u_1-u_1^I)'(x))^2 dx \vert & \leq C  N^{-2}\ln^{3}N
\end{align*}
and also
\begin{align*}
\vert \sum_{i=1}^{N}\int_{x_{i-1}}^{x_{i}}\delta_{2,i}b_2^2(x_i)((u_2-u_2^I)'(x))^2 dx \vert  \leq C N^{-2}\ln^{3}N.
\end{align*}
Substituting these estimates, we have
\begin{align*}
\vert\vert\vert \bar u - \bar u^I \vert\vert\vert_{V_h} & \leq    [\varepsilon(C \varepsilon^{-1}N^{-2} \ln^3 N)+\sigma(CN^{-5/2} \ln^{5/2} N)^2+C N^{-3}\ln^{4}N+C N^{-2}\ln^{3}N]^{1/2}\\
& \leq [\varepsilon(C\varepsilon^{-1}N^{-2}\ln^{3}N) + \sigma(CN^{-5/2} \ln^{5/2} N)^2 + C N^{-2}\ln^{3}N]^{1/2}\\
& \leq C N^{-1}\ln^{3/2}N[ 1 + N^{-3}\ln^2 N + 1]^{1/2}\\
\vert\vert\vert \bar u - \bar u^I \vert\vert\vert_{V_h} & \leq    C N^{-1} \ln^{3/2} N.
\end{align*}
\end{proof}
\section{Error Estimate}
Now  we state the main theorem of this paper.
\begin{theorem}
Let $\bar u$ and $\bar u_h$ be solution of (\ref{sys1}-\ref{sysbc}) and (\ref{dissys}) respectively. Then we have
\begin{equation*}
\vert\vert\vert \bar u - \bar u_h \vert\vert\vert_{V_h} \leq \begin{cases} C N^{-1}\ln^{3/2} N, \quad  \text{for Shishkin mesh,}\\
C N^{-1}, \quad \text{for Bakhvalov-Shishkin mesh}.
\end{cases}
\end{equation*}
\end{theorem}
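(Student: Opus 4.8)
The plan is to obtain the theorem almost for free by assembling the estimates already established, starting from the triangle inequality (\ref{err1}). First I would write
\[
\vert\vert\vert \bar u - \bar u_h \vert\vert\vert_{V_h} \leq \vert\vert\vert \bar u - \bar u^I \vert\vert\vert_{V_h} + \vert\vert\vert \bar u^I - \bar u_h \vert\vert\vert_{V_h}
\]
and control the two terms separately. The first term is exactly Lemma \ref{intpl4}, which already gives $\vert\vert\vert \bar u - \bar u^I \vert\vert\vert_{V_h} \leq C N^{-1}\ln^{3/2} N$ on the Shishkin mesh. For the second term I would invoke Lemma \ref{intpl2}, namely $\vert\vert\vert \bar u^I - \bar u_h \vert\vert\vert_{V_h} \leq C \Vert \bar u^I - \bar u \Vert_0$, which rests on the Galerkin orthogonality $B_h(\bar u_h - \bar u, \bar u^I - \bar u_h) = 0$ and the coercivity in Lemma \ref{coerlem1}, and then bound its right-hand side by the $L_2$-interpolation estimate of Lemma \ref{intpl3}, i.e. $\Vert \bar u^I - \bar u \Vert_0 \leq C N^{-5/2}\ln^{5/2} N$.

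Combining these three ingredients yields, on the Shishkin mesh,
\[
\vert\vert\vert \bar u - \bar u_h \vert\vert\vert_{V_h} \leq C N^{-1}\ln^{3/2} N + C N^{-5/2}\ln^{5/2} N = C N^{-1}\ln^{3/2} N\left(1 + N^{-3/2}\ln N\right) \leq C N^{-1}\ln^{3/2} N,
\]
since $N^{-3/2}\ln N$ is bounded uniformly in $N$. This disposes of the first case.

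For the Bakhvalov–Shishkin mesh I would run the identical three-step argument, but feeding in the sharper mesh-width information recorded in Section $3$: there $\max|\psi'| \leq C$ (rather than $C\ln N$) and $h_i/\varepsilon \leq C N^{-1}\max|\varphi'| \leq C$. Re-running the computations of Lemmas \ref{lemmai}, \ref{intpl3} and \ref{intpl4} with these estimates simply removes the logarithmic factors, giving $\vert\vert\vert \bar u - \bar u^I \vert\vert\vert_{V_h} \leq C N^{-1}$, $\Vert \bar u^I - \bar u \Vert_0 \leq C N^{-5/2}$, and hence $\vert\vert\vert \bar u - \bar u_h \vert\vert\vert_{V_h} \leq C N^{-1}$ after the triangle inequality.

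I do not expect a genuine obstacle at this stage: the delicate points have already been absorbed into the cited lemmas — in particular, the factor $\varepsilon^{-1}$ appearing in $|u_j - u_j^I|_1^2$ being cancelled exactly by the weight $\varepsilon$ in the energy norm, the streamline-diffusion contributions $\sum_i \int \delta_{k,i} b_k^2 ((u_j-u_j^I)')^2$ being tamed by the bound $\delta_{k,i} \leq C N^{-1}$, and the choice $\tau_0 \geq 2$ making the transition-layer remainders $N^{-\tau_0}$, $N^{1-\tau_0}$ negligible against $N^{-1}\ln^{3/2}N$. Thus the proof of the main theorem reduces to the bookkeeping sketched above; the real work — the energy-norm interpolation estimate that does not lose powers of $\varepsilon$ — is Lemma \ref{intpl4}.
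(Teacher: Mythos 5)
Your proposal is correct and follows essentially the same route as the paper: the triangle inequality (\ref{err1}), Lemma \ref{intpl4} for the interpolation term, and Lemmas \ref{intpl2}--\ref{intpl3} for the discrete term, combined exactly as the paper does, with the Bakhvalov--Shishkin case handled by the same argument without the logarithmic factors. No gaps beyond the level of detail the paper itself leaves implicit.
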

\begin{proof}
From the inequality (\ref{err1}), Lemmas (\ref{intpl2}), (\ref{intpl3}) and (\ref{intpl4}),  for Shishkin meshes  we have
\begin{eqnarray*}
\vert\vert\vert \bar u-\bar u_h \vert\vert\vert_{V_h} & \leq & CN^{-1} \ln^{3/2} N + C N^{-\frac{5}{2}}\ln^{\frac{5}{2}} N \\
& \leq & C N^{-1} \ln^{3/2} N.
\end{eqnarray*}
Similarly we prove the error estimates for Bakhvalov-Shishkin meshes.
\end{proof}

\section{Numerical Experiments}
 In this section we experimentally verify our theoretical results
 proved in the previous section.
\begin{example}\label{ex1}
  Consider the BVP
 \begin{eqnarray}\label{t1}
-\varepsilon u_1^{''} (x) + u_1'(x)+2u_1(x) - u_2(x) = f_1(x), \quad x \in \Omega^- \cup \Omega^+,\\
 -\varepsilon u_2^{''} (x) +u_2'(x)- u_1(x)  +2u_2(x) = f_2(x), \quad x \in \Omega^- \cup \Omega^+,
 \end{eqnarray}
 \begin{equation}\label{t1c}
  u_1(0)=0, \quad u_1(1)=0,\quad  u_2(0)=0,\quad  u_2(1)=0,
 \end{equation}
 where
 \begin{equation*}
 f_1(x)=\begin{cases} 1,\quad 0 \leq x \leq 0.5,\\
 -0.8,\quad 0.5 \leq x \leq 1
 \end{cases}
 \end{equation*}
 and
 \begin{equation*}
 f_2(x)=\begin{cases} -2.0,\quad 0 \leq x \leq 0.5,\\
 1.8,\quad 0.5 \leq x \leq 1
 \end{cases}
 \end{equation*}
\end{example}
For our tests, we take $\varepsilon = 2^{-18}$, which is sufficiently small to bring out the singularly perturbed nature of the problem. Now we define a maximum norm of $\bar u_h$ as \begin{equation*}
\parallel \bar u_h \parallel_{\infty} = \max \{ \max_{1\leq i \leq N-1} \{ \mid u_{1h}(x_i) \mid \},  \max_{1\leq i \leq N-1} \{ \mid  u_{2h}(x_i) \mid \} \}
\end{equation*}
We measure the accuracy in various norms and the rates of convergence $r^N$ are computed using the following formula:
\begin{equation*}
r^N = log_2(\dfrac{E^N}{E^{2N}}),
\end{equation*}
where
\begin{equation*}
 E^N = \begin{cases}
\parallel \bar u_h - \bar u_{2h}^I \parallel_{\infty}, \quad \text{for maximum norm,}\\
\parallel \bar u_h - \bar u_{2h}^I \parallel_{0}, \quad  \text{for} (L_{2}(\Omega))^2- \text{norm,}\\
\vert\vert\vert \bar u_h - \bar u_{2h}^I \vert\vert\vert_{V_h},\quad \text{ for discrete energy norm},
  \end{cases}
 \end{equation*}
  and $\bar u_h^I$ denotes the piecewise linear interpolant of $\bar U.$ \\
In Tables \ref{table1} and \ref{table2},  we present values of $E^N, r^N $ for the solution of the BVP (\ref{t1})-(\ref{t1c})  for Shishkin and Bakhvalov-Shishkin meshes respectively.
The Figures \ref{fig1}  and \ref{fig2} depict the numerical solution of the  BVP  (\ref{t1})-(\ref{t1c})  for Shishkin  mesh.
We compare the values of $E^N, r^N $ for the solution of the same BVP (\ref{t1})-(\ref{t1c})  for Shishkin mesh using the standard upwind scheme adopted \cite{tamram07a}.  From the tables,  we infer that the order of convergence is higher in the cases of maximum norm and $L_2-$ norm when compared with discrete energy norm as defined earlier.  Therefore the present method may yield better results.\\
\quad The numerical results are clear illustrations of the convergence estimates derived in the present paper for both the type of meshes.
\begin{remark}
It may be observed that the value of $\tau_0$ is taken as $\tau_0 \geq 2.$  From the above experimental results this condition seems to be essential.  Infact, it is found that if one takes the value $\tau_0< 2$ the order of convergence may not be $2.$
\end{remark}
\vspace{0.3cm}
\begin{table}[ht] \caption{\label{table1}\it{Values of $E^N$ and $r^N$ for
the solution of the BVP (\ref{t1}) - (\ref{t1c}) in different norms for Shishkin mesh.}} {\centering
\begin{tabular}{||c|c|c|c|c|c|c||} \hline \hline N & \multicolumn{2}{c|}{$\parallel \bar u_h - \bar u_h^I \parallel_{\infty}$} & \multicolumn{2}{c|}{$\parallel \bar u_h - \bar u_h^I \parallel_{0}$} &\multicolumn {2} {c|}{$\vert\vert\vert \bar u_h - \bar u_h^I \vert\vert\vert_{V_h}$} \\ \cline {2-7}
 &$E^N$ & $r^N$   & $E^N$ &$ r^N$  & $E^N$ & $r^N$
\\\hline
$ 32 $
 &2.3693e-01& 1.4253 &  1.0785e-02& 1.1113 & 2.7108e-01&  0.8742 \\
$ 64 $
 &8.8222e-02& 1.0592 &  4.9921e-03& 1.0447 & 1.4788e-01&  0.6716 \\
  $ 128 $
 &4.2337e-02& 0.9939&   2.4199e-03& 1.0176 & 9.2838e-02&  0.5973\\
    $ 256 $
 &2.1258e-02& 0.9986&   1.1953e-03& 1.0061 & 6.1517e-02&  0.5625\\
   $ 512 $
 &1.0639e-02& 1.0030&   5.9513e-04& 1.0011 & 4.1654e-02&  0.5620\\
   $ 1024 $
 &5.3085e-03& 1.0085&   2.9734e-04& 0.9990 & 2.8213e-02&  0.5522 \\
 $ 2048 $
 &2.6387e-03&      - &  1.4877e-04&      - & 1.9240e-02&      - \\
 \hline
 \end{tabular}\par}
 \end{table}
 \goodbreak\noindent
\vspace{0.3cm}

\vspace{0.3cm}
\begin{table}[ht] \caption{\label{table2}\it{Values of $E^N$ and $r^N$ for
the solution of the BVP (\ref{t1}) - (\ref{t1c}) in different norms for Bakhvalov-Shishkin mesh.}} {\centering
\begin{tabular}{||c|c|c|c|c|c|c||} \hline \hline N & \multicolumn{2}{c|}{$\parallel \bar u_h - \bar u_h^I \parallel_{\infty}$} & \multicolumn{2}{c|}{$\parallel \bar u_h - \bar u_h^I \parallel_{0}$} &\multicolumn {2} {c|}{$\vert\vert\vert \bar u_h - \bar u_h^I \vert\vert\vert_{V_h}$} \\ \cline {2-7}
 &$E^N$ & $r^N$   & $E^N$ &$ r^N$  & $E^N$ & $r^N$
\\\hline
$ 32 $
 &1.6550-01&  0.9811 &  1.1047e-02& 0.8554 & 2.7386e-01&  0.5465 \\
$ 64 $
 &8.3838e-02&  0.9865 & 4.9717e-03& 0.9120 & 1.8750e-01&  0.5304 \\
  $ 128 $
 &4.2313e-02&  0.9945&  2.3671e-03& 0.9535 & 1.2981e-01&  0.5194\\
    $ 256 $
 &2.1236e-02&  1.0001&  1.1551e-03& 0.9769 & 9.0558e-02&  0.5157\\
   $ 512 $
 &1.0617e-02& 1.0059&   5.7064e-04& 0.9874 & 6.3341e-02& 0.5192\\
   $ 1024 $
 &5.2870e-03& 1.0141&   2.8361e-04& 0.9940 & 4.4194e-02&  0.5322 \\
 $ 2048 $
 &2.6177e-03&      - &  1.4139e-04&      - & 3.0560e-02&      - \\
 \hline
 \end{tabular}\par}
 \end{table}
 \goodbreak\noindent
\vspace{0.3cm}
\begin{figure}
\centerline{
\begin{tabular}{cc}
\resizebox*{8cm}{!}{\includegraphics{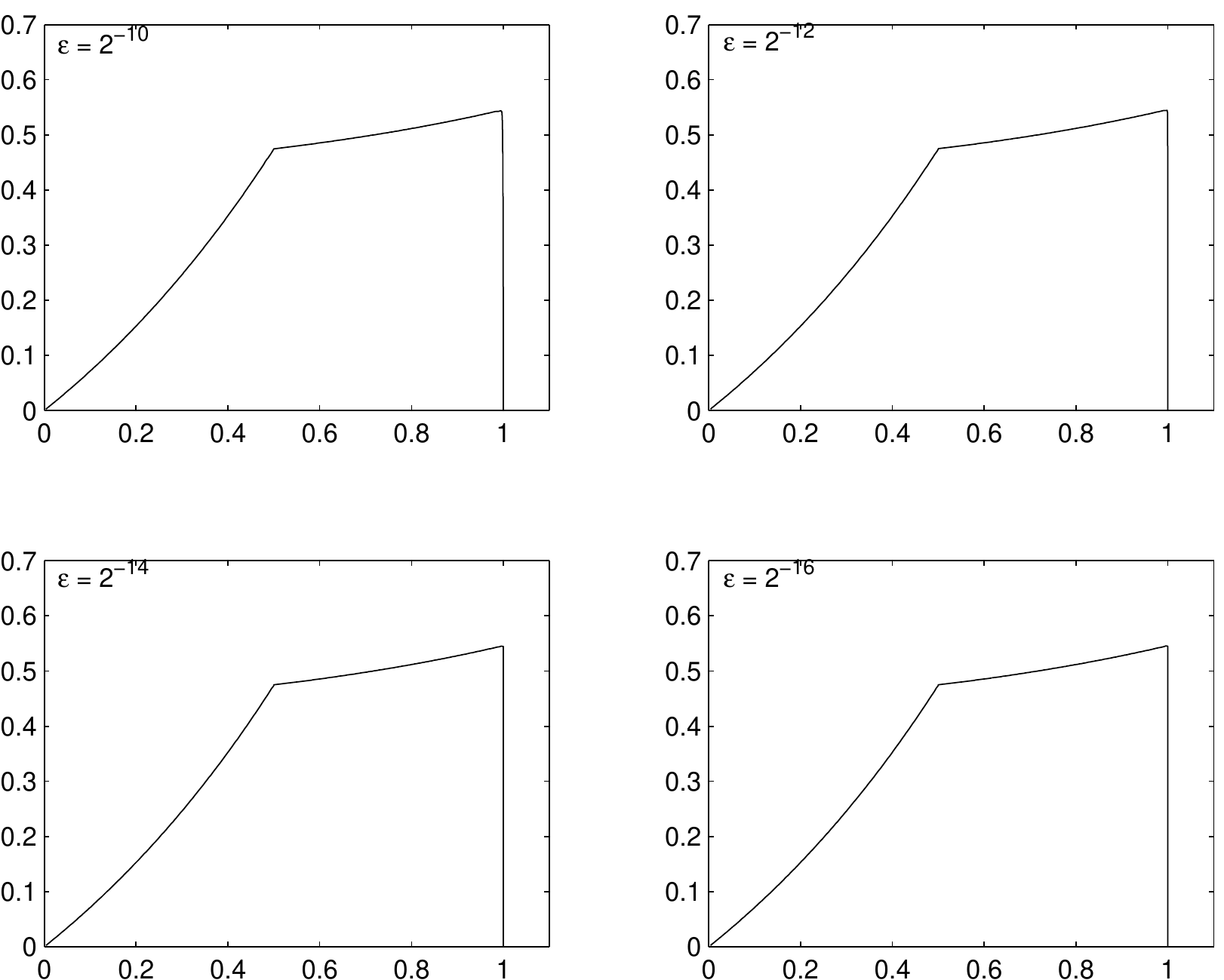}}
\end{tabular}
}
\caption{{\label{fig1}\it Graphs of the  numerical solution of the first component $u_{1h}$ of the BVP (\ref{t1})-(\ref{t1c}) for various values of $\varepsilon$ with $ N = 512.$}}
\end{figure}
\vspace{0.3cm}
\begin{figure}
\centerline{
\begin{tabular}{cc}
\resizebox*{8cm}{!}{\includegraphics{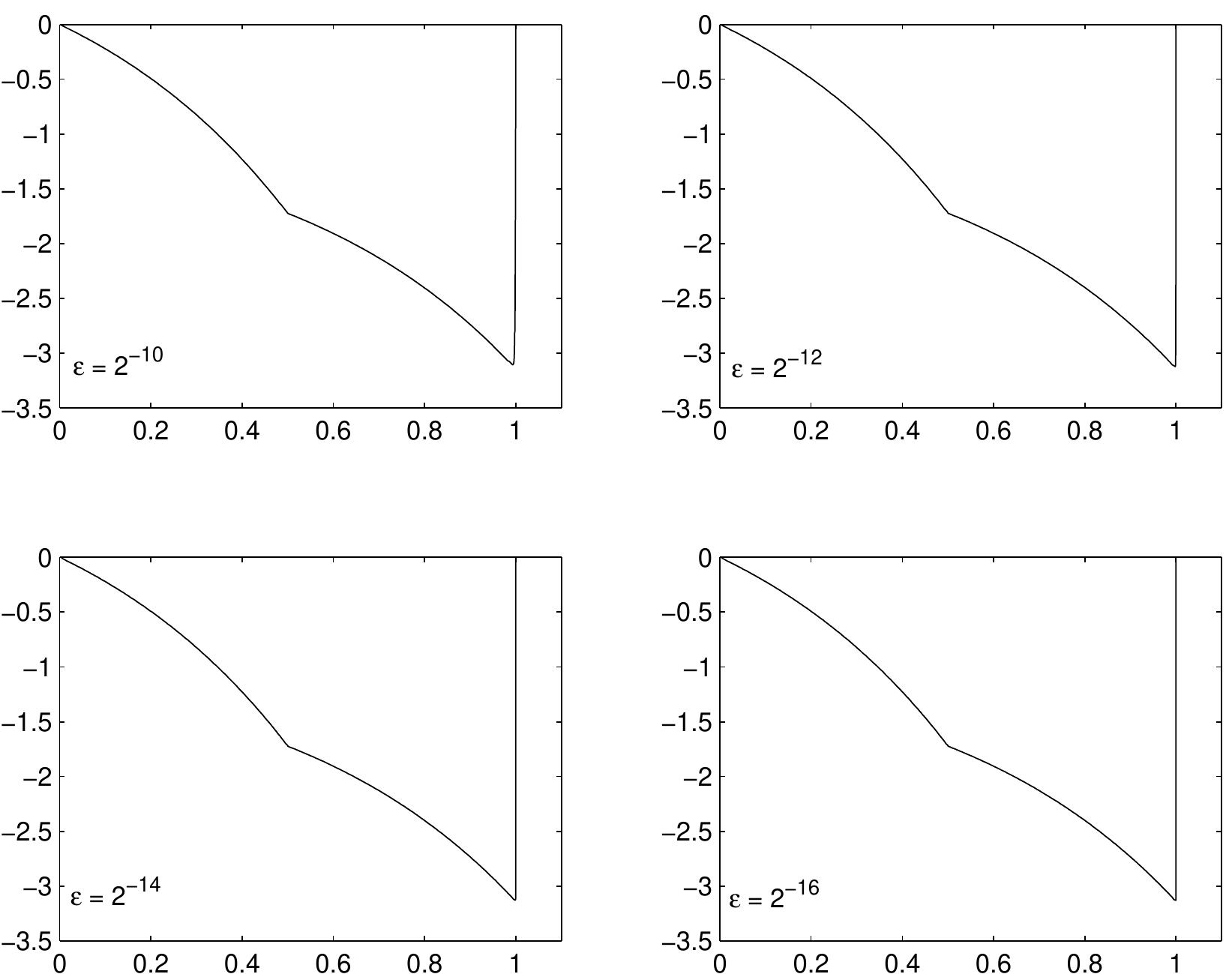}}
\end{tabular}
}
\caption{{\label{fig2}\it Graphs of the  numerical solution of the second component $u_{2h}$ of the BVP (\ref{t1})-(\ref{t1c}) for various values of $\varepsilon$ with $ N = 512.$}}
\end{figure}
\begin{figure}
\centerline{
\begin{tabular}{cc}
\resizebox*{8cm}{!}{\includegraphics{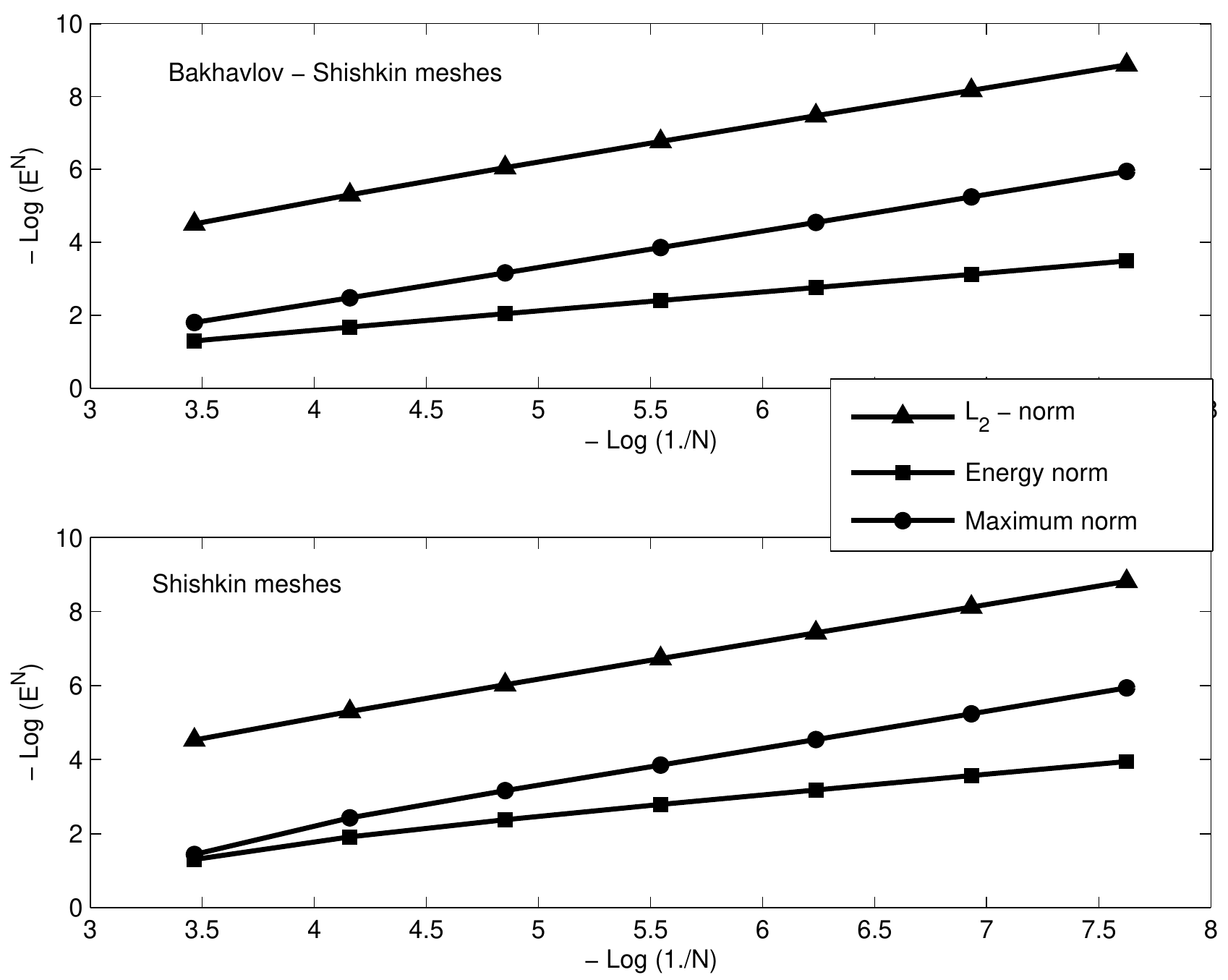}}
\end{tabular}
}
\caption{{\label{fig3}\it Plots of order of convergence for Example \ref{ex1} and  $\varepsilon = 2^{-18}$ in various norms.}}
\end{figure}
\goodbreak

\end{document}